\newtheorem{theorem}{Theorem}[section]
\newtheorem{lemma}[theorem]{Lemma}
\newtheorem{corollary}[theorem]{Corollary}
\newtheorem*{thmA}{Theorem A}
\newtheorem*{thmB}{Theorem B}
\newtheorem*{thmC}{Theorem C}
\numberwithin{equation}{section}
\begin{document}
\title[Cosets of normal subgroups and union of two conjugacy classes]{Cosets of normal subgroups  and union of two conjugacy classes}

    \author[Beltr\'an]{
       Antonio Beltr\'an\\
     Departamento de Matem\'aticas\\
      Universitat Jaume I \\
     12071 Castell\'on, Spain\\
        }

 \thanks{ Antonio Beltr\'an: abeltran@uji.es,                                                                                   ORCID: 0000-0001-6570-201X }

\keywords{Conjugacy classes,  characters, cosets of normal subgroups, finite simple groups}

\subjclass[2010]{20C15, 20E45, 20D06}

\begin{abstract}
Let $G$ be a finite group, $N$ a normal subgroup of $G$ and $x\in G-N$. We discuss when the coset $Nx$ is contained in the union of two conjugacy classes, $K$ and $D$,  of $G$. We show that $N$ need not be solvable, and can even be non-abelian simple, but in these cases, $K$ and $D$ must have the same cardinality, and the non-solvable structure of $N$ is restricted. The non-abelian principal factors of $G$  contained in $N$ are  then isomorphic to $S\times \cdots\times S$, where $S$ is a simple group of Lie type of odd characteristic.
\end{abstract}

\maketitle

\section{Introduction}

Let $G$ be a finite group and $N$ a  proper normal subgroup of $G$. The set $G - N$ is the union of certain conjugacy classes of $G$, and at the same time, is the union of all non-trivial cosets of $N$. Guralnick and Navarro focused on a single coset of $N$ that is contained in a conjugacy class of $G$  and demonstrate the solvability of $N$ by appealing to the Classification of Finite Simple Groups \cite[Theorem B(a)]{GN}. Inspired by this result, the author proved that if a single coset $Nx$ with $x\in G - N$ lies in the union $K\cup K^{-1}$, where $K$ is a conjugacy class of $G$, then  $N$ is solvable as well \cite{B}.  Continuing this investigation, in this note we ask whether the same conclusion can be reached when $Nx$ is contained in the union of any two different conjugacy classes $K$ and $D$ (but not in just one of them). There are special cases where this is true, e.g., when  the elements of $K$ and $D$ are of  odd order \cite[Theorem B(c)]{GN}. In general, however,  we will see that the answer is negative.

\medskip
Our first result consists in translate the condition on the coset $Nx$, that  is,  $Nx\subseteq K \cup D$, into the language of  Character Theory.

\begin{thmA} Let $G$ be a finite group, $N$ a normal subgroup of $G$ and $x\not\in N$. Let $K =x^G$ and $D=d^G$ be distinct conjugacy classes of $G$. Then the following conditions  are equivalent:

 \begin{itemize}
 \item[(1)] $Nx \subseteq K \cup D$, but $Nx \not\subseteq K$.
 \item[(2)] Every  $\chi \in {\rm Irr}(G)$ satisfies that if $N\subseteq {\rm ker}(\chi)$, then $\chi(x)=\chi(d)$; and if $N\not\subseteq {\rm ker}(\chi)$, then
 $$|K|\chi(x) +|D|\chi(d)=0,$$
 and, in addition, $|{\bf C}_{G/N}(Nx)|= |G|/(|K|+|D|)$.
\end{itemize}
\end{thmA}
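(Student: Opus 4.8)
The plan is to pass the coset condition through the standard dictionary relating cosets of $N$ to ${\rm Irr}(G)$. Everything rests on one identity, which I would record first as a lemma: for $\chi\in{\rm Irr}(G)$,
$$\sum_{n\in N}\chi(nx)=\begin{cases}|N|\,\chi(x)&\text{if }N\subseteq{\rm ker}(\chi),\\ 0&\text{if }N\not\subseteq{\rm ker}(\chi).\end{cases}$$
The first case is immediate, since then $\chi$ inflates from $G/N$ and is constant on $Nx$. For the second, let $V$ be a $\mathbb{C}G$-module affording $\chi$, with representation $\rho$; as $N\trianglelefteq G$, the subspace $V^{N}$ of $N$-fixed vectors is $G$-invariant, hence by irreducibility equals $0$ or $V$, and it equals $V$ precisely when $N\subseteq{\rm ker}(\chi)$; so here $V^{N}=0$, whence $\sum_{n\in N}\rho(n)$ — which is $|N|$ times the projection of $V$ onto $V^{N}$ — is zero, and $\sum_{n\in N}\rho(nx)=\bigl(\sum_{n\in N}\rho(n)\bigr)\rho(x)=0$; taking traces gives the identity.

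Next I would invoke column orthogonality in $G$: $\sum_{\chi\in{\rm Irr}(G)}\chi(g)\overline{\chi(x)}$ equals $|{\bf C}_G(x)|$ when $g\in K$ and $0$ otherwise. Summing this over $g=nx$ with $n\in N$, rescaling by $|K|/|G|=1/|{\bf C}_G(x)|$, and substituting the lemma collapses the character sum to the characters inflated from $G/N$ and produces the two formulas on which the argument pivots:
$$|Nx\cap K|=\frac{|K|\,|N|}{|G|}\sum_{\lambda\in{\rm Irr}(G/N)}|\lambda(Nx)|^{2}=\frac{|K|\,|N|}{|G|}\,\bigl|{\bf C}_{G/N}(Nx)\bigr|$$
(the last equality being column orthogonality in $G/N$), and, with $\overline{\chi(d)}$ in place of $\overline{\chi(x)}$,
$$|Nx\cap D|=\frac{|D|\,|N|}{|G|}\sum_{\lambda\in{\rm Irr}(G/N)}\overline{\lambda(Nd)}\,\lambda(Nx).$$

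Both implications are then short bookkeeping with these formulas. For $(2)\Rightarrow(1)$: the hypothesis $\chi(x)=\chi(d)$ for all $\chi$ trivial on $N$ says exactly that $Nx$ and $Nd$ are conjugate in $G/N$, so the second display becomes $\tfrac{|D|\,|N|}{|G|}\,|{\bf C}_{G/N}(Nx)|$; inserting $|{\bf C}_{G/N}(Nx)|=|G|/(|K|+|D|)$ into both displays gives $|Nx\cap K|=|K|\,|N|/(|K|+|D|)$ and $|Nx\cap D|=|D|\,|N|/(|K|+|D|)$, which sum to $|N|=|Nx|$; as $K\cap D=\emptyset$ this forces $Nx\subseteq K\cup D$, and $|Nx\cap D|=|N|-|Nx\cap K|>0$ (since $|D|\ge1$) gives $Nx\not\subseteq K$. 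For $(1)\Rightarrow(2)$: from $Nx\subseteq K\cup D$ each $nx$ lies in exactly one of $K,D$, so $\sum_{n\in N}\chi(nx)=|Nx\cap K|\,\chi(x)+|Nx\cap D|\,\chi(d)$, with $|Nx\cap K|+|Nx\cap D|=|N|$ and $|Nx\cap D|>0$ because $Nx\not\subseteq K$; comparing with the lemma, for $\chi$ trivial on $N$ one obtains $|Nx\cap D|(\chi(x)-\chi(d))=0$, hence $\chi(x)=\chi(d)$ (so $Nx,Nd$ are $G/N$-conjugate); the two displays together with $|Nx\cap K|+|Nx\cap D|=|N|$ then force $|{\bf C}_{G/N}(Nx)|=|G|/(|K|+|D|)$ and the proportion $|Nx\cap K|:|Nx\cap D|=|K|:|D|$; and for $\chi$ not trivial on $N$ the lemma gives $|Nx\cap K|\chi(x)+|Nx\cap D|\chi(d)=0$, which by that proportion reads $|K|\chi(x)+|D|\chi(d)=0$. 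The only point that genuinely needs care — and the reason the centralizer identity enters at all — is upgrading a bare linear relation between $\chi(x)$ and $\chi(d)$ to this exact proportion; it is not an independent hypothesis but is forced, via the explicit count of $|Nx\cap K|$ by column orthogonality in $G/N$ together with the value of $|{\bf C}_{G/N}(Nx)|$. Everything else is routine.
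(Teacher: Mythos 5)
Your proof is correct, but it is organized quite differently from the paper's. The paper routes everything through the intermediate condition $NK=ND=K\cup D$ and the class algebra of $\mathbb{C}[G]$: for $(1)\Rightarrow(2)$ it shows $\mathfrak{X}(\widehat{N})=O$ for $\chi\in{\rm Irr}(G|N)$ (the same fact as your opening lemma, proved the same way via Schur's lemma), writes $\widehat{K}\widehat{N}=m_1\widehat{K}+m_2\widehat{D}$, and then needs a separate second-orthogonality computation to establish $m_1=m_2$; for $(2)\Rightarrow(1)$ it expands $\widehat{K}\widehat{C}$ for each class $C\subseteq N$ and evaluates $\alpha_K|K|+\alpha_D|D|$ against the total count $|K||C|$ to kill the remaining structure constants. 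You instead extract two unconditional counting formulas, $|Nx\cap K|=\frac{|K|\,|N|}{|G|}\,|{\bf C}_{G/N}(Nx)|$ and its analogue for $D$, by summing column orthogonality over the coset, after which both implications are pure arithmetic and the class algebra never appears. This buys a noticeably shorter converse (the paper's longest computation collapses to the observation that the two intersections sum to $|N|$), and it exposes a small strengthening: your argument for $(2)\Rightarrow(1)$ uses only the ${\rm Irr}(G/N)$ condition together with the centralizer identity, so the relation $|K|\chi(x)+|D|\chi(d)=0$ on ${\rm Irr}(G|N)$ is redundant as a hypothesis in that direction. What the paper's formulation buys in exchange is the explicit reusable identity $NK=ND=K\cup D$, which it invokes later in the proofs of Theorems B and C.
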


We remark that the conditions concerning character values, as well as the equality relating to the order of the centralizer in statement (2)  of Theorem A can be read off from the character table of $G$. Furthermore, under certain additional assumptions to the hypothesis of Theorem A,  more information about the cardinality of $K$ and $D$ and about the character values on these classes can be obtained.

\begin{thmB} Let $G$ be a finite group and $N$ a normal subgroup of $G$. Suppose that $K =x^G$ and $D=d^G$ are distinct conjugacy classes of $G$, with $x,d \in G$, such that $Nx\subseteq K \cup D$ and $Nx \not\subseteq K$.  Suppose further that $1_N\neq \theta \in {\rm Irr}(N)$ extends to $G$, say $\hat{\theta}_N=\theta$, with $\hat{\theta}\in {\rm Irr}(G)$. Then
\begin{itemize}
\item[(1)] $|K|=|D|$.
\item[(2)] Every extension $\chi\in {\rm Irr}(G)$ of $\theta$  satisfies  $\chi(x)=-\chi(d)$ and $|\chi(x)|=1$. In addition, every $\varphi\in {\rm Irr}(G)$ lying over $\theta$ is of the form $\varphi=\hat{\theta}\beta$
  with $\beta\in {\rm Irr}(G/N)$.
\item[(3)]
Every $\psi\in{\rm Irr}(G)$ that does not lie over $\theta$ or $1_N$ satisfies $\psi(d)=\psi(x)=0$. As a consequence, $\theta$ is the only non-principal irreducible character of $N$ that extends to $G$.
\end{itemize}
\end{thmB}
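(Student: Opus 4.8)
The plan is to combine Gallagher's theorem with a counting identity for $\hat\theta$ on the coset $Nx$, and then to read the outcome against Theorem A. Note first that, since $Nx\subseteq K\cup D$ while $Nx\not\subseteq K$ and $x\in Nx\cap K$, one also has $Nx\not\subseteq D$ (otherwise $x\in D$, forcing $K=D$), so $Nx$ meets both classes; and by a standard character count, together with the centralizer equality $|{\bf C}_{G/N}(Nx)|=|G|/(|K|+|D|)$ from Theorem A(2), one gets
\[
|Nx\cap K|=\frac{|N|\,|K|}{|K|+|D|}\,,\qquad |Nx\cap D|=\frac{|N|\,|D|}{|K|+|D|}\,.
\]
By Gallagher's theorem every $\chi\in{\rm Irr}(G)$ lying over $\theta$ has the form $\hat\theta\beta$ for a unique $\beta\in{\rm Irr}(G/N)$, these products are irreducible and pairwise distinct, and the extensions of $\theta$ to $G$ are precisely the $\hat\theta\lambda$ with $\lambda$ linear; this already yields the last assertion of (2). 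Since $\theta\ne 1_N$, any $\chi$ over $\theta$ has $N\not\subseteq{\rm ker}(\chi)$, so Theorem A(2) gives $|K|\chi(x)+|D|\chi(d)=0$; applied to $\hat\theta$ this reads $\hat\theta(d)=-(|K|/|D|)\,\hat\theta(x)$.

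The key step is the identity
\[
\sum_{n\in N}\bigl|\hat\theta(nx)\bigr|^{2}=|N|.
\]
To prove it, observe that $\hat\theta\overline{\hat\theta}$ is a character of $G$ and, for $\beta\in{\rm Irr}(G/N)$, $\langle\hat\theta\overline{\hat\theta},\beta\rangle=\langle\hat\theta,\hat\theta\beta\rangle$, which equals $1$ when $\beta=1_G$ and $0$ otherwise (again $\hat\theta\beta$ is irreducible by Gallagher). Hence the only irreducible constituent of $\hat\theta\overline{\hat\theta}$ inflated from $G/N$ is $1_G$, with multiplicity one; since summing a character of $G$ over a nontrivial coset of $N$ returns $|N|$ times the value on that coset of its ``$G/N$-part'' (the projection onto the $N$-fixed subspace), the left-hand side above equals $|N|$. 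On the other hand $\hat\theta$ is a class function, so that same sum equals $|Nx\cap K|\,|\hat\theta(x)|^{2}+|Nx\cap D|\,|\hat\theta(d)|^{2}$. Substituting the coset cardinalities and $\hat\theta(d)=-(|K|/|D|)\hat\theta(x)$ and simplifying, one obtains $|\hat\theta(x)|^{2}=|D|/|K|$ and consequently $|\hat\theta(d)|^{2}=|K|/|D|$.

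Now everything follows. Each of $|\hat\theta(x)|^{2}$ and $|\hat\theta(d)|^{2}$ is an algebraic integer (a character value times its complex conjugate) and, by the equalities just obtained, a positive rational number, hence a positive (ordinary) integer; as their product is $1$, we get $|K|=|D|$, which is (1), and moreover $|\hat\theta(x)|=|\hat\theta(d)|=1$ and $\hat\theta(d)=-\hat\theta(x)$. For (2): an extension $\chi=\hat\theta\lambda$ of $\theta$ satisfies $|\chi(x)|=|\hat\theta(x)|\,|\lambda(Nx)|=1$, and $|K|\chi(x)+|D|\chi(d)=0$ with $|K|=|D|$ forces $\chi(x)=-\chi(d)$. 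For (3): let $\psi\in{\rm Irr}(G)$ lie over some $\mu\in{\rm Irr}(N)\setminus\{1_N,\theta\}$. Then for every $\beta\in{\rm Irr}(G/N)$ one has $\langle\overline{\hat\theta}\,\psi,\beta\rangle=\langle\psi,\hat\theta\beta\rangle=0$, because $\hat\theta\beta$ is irreducible and lies over $\theta$ while $\psi$ does not; so $\overline{\hat\theta}\,\psi$ has trivial $G/N$-part and $\sum_{n\in N}\overline{\hat\theta(nx)}\,\psi(nx)=0$. Splitting this sum over $Nx\cap K$ and $Nx\cap D$, which now have equal size $|N|/2$, and using $\psi(d)=-\psi(x)$ (Theorem A(2) with $|K|=|D|$), we obtain $\tfrac{|N|}{2}\,\psi(x)\bigl(\overline{\hat\theta(x)}-\overline{\hat\theta(d)}\bigr)=0$; since $\hat\theta(d)=-\hat\theta(x)\ne 0$, this yields $\psi(x)=0$ and hence $\psi(d)=0$. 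Finally, if a non-principal character $\mu\ne\theta$ of $N$ also extended to $G$, its extension would vanish at $x$ by (3), while the same argument applied to $\mu$ in place of $\theta$ gives $|\hat\mu(x)|=1$; this contradiction shows that $\theta$ is the only non-principal irreducible character of $N$ extending to $G$.

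The decisive point — the part I expect to be the real obstacle — is spotting the coset-sum identity and pairing it with the integrality remark: it is precisely because $\theta$ \emph{extends} that the average of $|\hat\theta|^{2}$ over $Nx$ is exactly $1$, which pins $|\hat\theta(x)|^{2}$ down to the single value $|D|/|K|$, and a rational algebraic integer must be an ordinary integer. Once that is in place, parts (1), (2) and (3) are short deductions from Gallagher's theorem and Theorem A.
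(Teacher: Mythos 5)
Your proof is correct and follows essentially the same route as the paper: the decisive identity $\sum_{n\in N}|\hat\theta(nx)|^{2}=|N|$ is exactly \cite[Lemma 8.14(c)]{Isaacs} (which the paper cites and you reprove via Gallagher and the projection onto the $N$-fixed part), and the subsequent steps --- combining it with $\hat\theta(d)=-(|K|/|D|)\hat\theta(x)$ from Theorem A, the rational-algebraic-integer argument forcing $|K|=|D|$, Gallagher for (2), and the vanishing of $\sum_{y\in Nx}\overline{\hat\theta(y)}\psi(y)$ for (3) --- mirror the paper's proof, the only cosmetic difference being that you split the single coset $Nx$ into $Nx\cap K$ and $Nx\cap D$ where the paper sums over all $t$ conjugate cosets whose union is $K\cup D$.
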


We show  through a series of examples that, under the assumptions of Theorem A, the subgroup $N$ may be solvable or not, and that $K$ and $D$ may possess different cardinality.
 However, when $N$ is non-solvable then the existence of a non-principal character of $N$  that extends to $G$ is guaranteed, and this, in turn, leads to the conclusion that $|K|=|D|$ and to specific constraints on the structure of $N$. Of course, our next result is based on the Classification.

\begin{thmC} Let $G$ be a finite group, $N$  a normal subgroup of $G$, and $x\not \in N$. Suppose that $K$ and $D$ are conjugacy classes of $G$ such that  $Nx\subseteq K \cup D$. If $N$ is non-solvable, then $|K|=|D|$ and  every non-abelian principal factor of $G$  contained in $N$ is isomorphic to $S\times \cdots \times S$, where $S$ is a simple group of Lie type of odd characteristic.
 \end{thmC}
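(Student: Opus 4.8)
The plan is to transfer the coset condition to the character side via Theorems~A and~B, reduce the structural assertion to the case of a minimal normal subgroup, and then invoke the Classification.

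First I would note that by \cite[Theorem B(a)]{GN} the inclusion $Nx\subseteq K$ forces $N$ to be solvable (and likewise $Nx\subseteq D$); since we assume $N$ non-solvable, we have $Nx\not\subseteq K$ and $Nx\not\subseteq D$, so the hypotheses of Theorems~A and~B hold. Next, given a non-abelian chief factor $M/L$ of $G$ with $L<M\le N$, I would pass to $G/L$: there the image of $N$ is still non-solvable, $\overline{Nx}$ still lies in the union of two classes and, again by \cite[Theorem B(a)]{GN}, in neither of them alone, and $\overline{M}$ is a minimal normal subgroup; since $\overline{M}\,\overline{x}\subseteq\overline{Nx}$, the same situation persists with $\overline{M}$ in place of $\overline{N}$. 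Thus, for the structural part it suffices to treat the case where $N=S\times\cdots\times S$ is itself a minimal normal subgroup of $G$, with $G$ permuting the $k$ simple direct factors transitively; the relevant object is then the almost simple group $R$ with socle $S$ that is the image of a point stabiliser in ${\rm Aut}(S)$.

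The mechanism I would exploit is the following. Suppose a non-principal $\theta\in{\rm Irr}(N)$ extends to $G$. Then Theorem~B(1) gives $|K|=|D|$ immediately, and Theorem~B(2)--(3) give that $\theta$ is the \emph{unique} non-principal irreducible character of $N$ extending to $G$, that every extension $\chi$ of $\theta$ satisfies $\chi(x)=-\chi(d)$ and $|\chi(x)|=1$, and that every $\psi\in{\rm Irr}(G)$ lying over neither $\theta$ nor $1_N$ vanishes at $x$ and $d$. When $S$ is of Lie type in characteristic $p$, the character ${\rm St}_S\times\cdots\times{\rm St}_S$ of $N$ is $G$-invariant and extends to $G$ (the Steinberg character is rational and extends canonically to ${\rm Aut}(S)$), so such a $\theta$ exists and everything is consistent; the content of Theorem~C is the converse, namely that $S$ is forced to be of Lie type of odd characteristic, whence $|K|=|D|$ also follows.

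That converse is where the Classification enters. Assuming $S$ alternating, sporadic, or of Lie type of characteristic $2$ (including the exceptional isomorphisms), I would run through the families and, in each, analyse the $R$-invariant irreducible characters of $S$ against the value identity $|K|\chi(x)+|D|\chi(d)=0$ of Theorem~A, the uniqueness and the normalisation $|\chi(x)|=1$ of Theorem~B, and the centraliser identity $|{\bf C}_{G/N}(Nx)|=|G|/(|K|+|D|)$, deriving a contradiction in every case — either no admissible $\theta$ exists, or two incompatible candidates do, or the available character values on the coset $Nx$ violate $|\chi(x)|=1$ or the centraliser count. I expect this Classification-driven case analysis, and in particular isolating exactly why characteristic $2$ is excluded while odd characteristic survives (the surviving $R$-invariant character being a power of the Steinberg character, whose values $\pm|{\bf C}_S(\cdot)|_p$ on $p$-regular elements must fit the coset and centraliser conditions), to be the main obstacle. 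A secondary technical nuisance is to guarantee in the previous step that an automorphism-invariant character of the chief factor can be promoted to a character of $N$ that extends all the way to $G$, and to check that the reduction to a minimal normal subgroup still delivers $|K|=|D|$ for the original classes $K$ and $D$.
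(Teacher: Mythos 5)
Your high-level architecture matches the paper's: pass to a quotient so that the chief factor becomes a minimal normal subgroup, use \cite[Theorem B(a)]{GN} to rule out $Nx\subseteq K$, obtain $|K|=|D|$ from Theorem~B(1) once an extendible non-principal character of $N$ is produced, and let the Classification decide which simple groups $S$ can occur. The lifting issue you flag as a ``secondary nuisance'' is indeed handled exactly as you suspect it should be: an extension $\chi\in{\rm Irr}(\overline G)$ of the Steinberg product on $\overline L$ inflates to $G$, and since $\chi_L$ is irreducible so is $\chi_N$, which is therefore a non-principal character of $N$ extending to $G$; Theorem~B(1) then gives $|K|=|D|$ for the original classes.

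However, the core of the theorem --- the exclusion of the alternating, sporadic and characteristic-$2$ Lie type possibilities for $S$ --- is left in your proposal as a declared intention (``I would run through the families \dots deriving a contradiction in every case''), and the mechanisms you gesture at are not the ones that actually close the cases; this is a genuine gap. The paper's engine is the product identity $KC=K\cup D$ for \emph{every} non-trivial class $C$ of $G$ inside $N$ (the degenerate cases $KC=K$ and $KC=D$ being excluded by \cite[Theorem B(a)]{GN} and by \cite[Lemma 2]{BCFM} on $KC=D$ with $|K|=|D|$, respectively), combined with a character-theoretic reformulation (Lemma~3.6 of the paper) giving $|C|\chi(x)\chi(c)=\chi(1)(a\chi(x)+b\chi(d))$ with $a+b=|C|$. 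For alternating ($n\ge 7$) and sporadic $S$, two extendible characters of coprime degrees force $(a-b)/|C|\in\mathbb{Z}$, hence $a=b$ for every $C$, hence a non-principal character vanishing on all of $N\setminus\{1\}$ --- impossible. (A shortcut you half-identify, via the uniqueness statement of Theorem~B(3), would also work here, but you do not execute it.) For characteristic $2$, the decisive point, entirely absent from your sketch, is that the Steinberg character vanishes on $2$-singular elements, so $a=b=|C|/2$ for every class of even-order elements of $N$, forcing all such classes to have even size; yet a non-trivial element of ${\bf Z}(P)\cap N$ for $P\in{\rm Syl}_2(G)$ has a class of odd size. Without these (or equivalent) arguments, the statement that only odd-characteristic Lie type survives is unproved.
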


All groups considered are finite, the employed notation and terminology are usual. We will use \cite{Huppert} and \cite{Isaacs} to refer to standard results on Character Theory.

\section{Proofs of Theorems A and B}

 We start by establishing some notation. Let $G$ be a finite group and $N$ a normal subgroup of $G$. We identify ${\rm Irr}(G/N)$ with the set of irreducible characters of $G$ that contain $N$ in its kernel, and denote by ${\rm Irr}(G|N)$ the set of irreducible characters of $G$ that do not contain $N$ in its kernel, so both subsets certainly provide a partition of ${\rm Irr}(G)$.

 \medskip
 In the complex group algebra ${\mathbb C}G$, if $X\subseteq G$, we write $\widehat{X} =\sum_{x\in X} x\in {\mathbb C}[G]$. Let $\{C_1,\ldots, C_t\}$ be the set of conjugacy classes of $G$. We know that each $\widehat{C_i}$ lies in ${\bf Z}( {\mathbb C}[G])$ and  that  $\{\hat{C_1},\ldots, \hat{C_t}\}$  is a basis of that algebra. Thus,  for every pair of conjugacy class sums
$\widehat{C}_{m}$ and $\widehat{C}_{n}$ with representatives $c_{m}$ and $c_{n}$ we can write
$$
\widehat{C}_{m}\widehat{C}_{n}=\sum_{i}\alpha_{i}\widehat{C}_{i},
$$
where
$$
 \alpha_{i}=\frac{|C_{m}||C_{n}|}{|G|}\sum_{\chi\in {\rm Irr}(G)}\frac{\chi(c_{m})\chi(c_{n})\overline{\chi(c_{i})}}{\chi(1)}
$$
 with $c_{i}\in C_{i}$ (see for  instance \cite[ Theorem 4.6]{Huppert}).
 These formulae will be used throughout without reference.

 \medskip
  For convenience, we prove  a reformulated version of Theorem A, which includes a little more information.

 \begin{theorem}\label{2} Let $G$ be a finite group and $N$ a normal subgroup of $G$. Let $K =x^G$ and $D=d^G$ be distinct conjugacy classes of $x,d \in G$. Then the following conditions are equivalent:

 \begin{itemize}
 \item[(a)] $Nx \subseteq K \cup D$, but $Nx \not\subseteq K$.
 \item[(b)] $NK=ND=K\cup D$.
 \item[(c)] Every  $\chi \in {\rm Irr}(G/N)$ satisfies $\chi(x)=\chi(d)$; and  every $\chi\in {\rm Irr}(G|N)$ satisfies
 $$|K|\chi(x) +|D|\chi(d)=0,$$
and, in addition, $|{\bf C}_{G/N}(Nx)|= |G|/(|K|+|D|)$.
 \end{itemize}

\end{theorem}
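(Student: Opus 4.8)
The plan is to prove the two equivalences (a) $\Leftrightarrow$ (b) and (b) $\Leftrightarrow$ (c) separately: the first is elementary bookkeeping with cosets and conjugacy, and the second is where the class-sum formulas recalled above come in.

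The pivot for (a) $\Leftrightarrow$ (b) is that, since $N\trianglelefteq G$, one has $NK=Nx^{G}=\bigcup_{g\in G}Nx^{g}=(Nx)^{G}$, and likewise $ND=(Nd)^{G}$; in particular $NK$ and $ND$ are at once unions of conjugacy classes of $G$ and unions of left cosets of $N$. Granting (a): since $K\cup D$ is conjugation-invariant, $NK=(Nx)^{G}\subseteq K\cup D$; as $K\subseteq NK$ and $NK$ is a union of classes, $NK$ equals $K$ or $K\cup D$, and the hypothesis $Nx\not\subseteq K$ excludes the former, so $NK=K\cup D$; choosing $y\in Nx\cap D$ and writing $y=d^{g}$ gives $Nd^{g}=Ny=Nx$, so $ND=(Nd)^{G}=(Nx)^{G}=K\cup D$, which is (b). Conversely, if (b) holds then $Nx\subseteq NK=K\cup D$, and $Nx\subseteq K$ would give $NK=(Nx)^{G}\subseteq K$, forcing $K\cup D=K$, impossible as $D\neq K$.

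For (b) $\Rightarrow$ (c) I would check the three assertions in turn. The condition on ${\rm Irr}(G/N)$: from $NK=ND$ as subsets of $G$, and since the left cosets of $N$ partition $G$, the conjugacy classes of $Nx$ and of $Nd$ in $G/N$ coincide, hence $\chi(x)=\chi(d)$ whenever $N\subseteq{\rm ker}(\chi)$. The condition on ${\rm Irr}(G|N)$: because $K\cup D=NK$ is a union of left $N$-cosets, $\widehat{N}(\widehat{K}+\widehat{D})=|N|(\widehat{K}+\widehat{D})$ in ${\bf Z}({\mathbb C}[G])$; applying the central character $\omega_{\chi}\colon\widehat{C}\mapsto |C|\chi(c)/\chi(1)$ and using that $\omega_{\chi}(\widehat{N})=\chi(1)^{-1}\sum_{n\in N}\chi(n)$ equals $|N|$ if $N\subseteq{\rm ker}(\chi)$ and $0$ otherwise (Clifford's theorem, since $1_{N}$ is $G$-invariant), the left side becomes $0$ for $\chi\in{\rm Irr}(G|N)$, giving $|K|\chi(x)+|D|\chi(d)=0$. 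The centralizer equality: $NK=(Nx)^{G}$ is the disjoint union of the $|G/N|/|{\bf C}_{G/N}(Nx)|$ cosets obtained by conjugating $Nx$, each of size $|N|$, so $|NK|=|G|/|{\bf C}_{G/N}(Nx)|$, and $NK=K\cup D$ (disjoint) makes the left side $|K|+|D|$. For the converse, (c) $\Rightarrow$ (b): since ${\rm Irr}(G/N)$ spans the class functions of $G/N$, $\chi(x)=\chi(d)$ for all such $\chi$ forces $Nx$ and $Nd$ to be conjugate in $G/N$, i.e.\ $NK=(Nx)^{G}=(Nd)^{G}=ND$; then $K\cup D\subseteq NK$, and reading the centralizer count of (c) backwards (or, alternatively, feeding the vanishing on ${\rm Irr}(G|N)$ into $\widehat{N}(\widehat{K}+\widehat{D})=|N|(\widehat{K}+\widehat{D})$, legitimate because the $\omega_{\chi}$ separate the points of ${\bf Z}({\mathbb C}[G])$) forces $|NK|=|K|+|D|$, whence $NK=K\cup D=ND$.

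The one step deserving care is the dictionary \emph{``$|K|\chi(x)+|D|\chi(d)=0$ for all $\chi\in{\rm Irr}(G|N)$''} versus \emph{``$K\cup D$ is a union of left cosets of $N$''}, which rests on the Clifford-theoretic value of $\omega_{\chi}(\widehat{N})$ and on an element of the class algebra being determined by its central characters; everything else is counting. I would also note in passing that, under these hypotheses, the centralizer equality in (c) is in fact a consequence of the two character conditions (and, conversely, it together with the condition on ${\rm Irr}(G/N)$ already yields (b)); it is retained in the statement only because it can be read directly off the character table.
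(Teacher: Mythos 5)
Your proof is correct, and in the two substantive directions it takes a genuinely different --- and shorter --- route than the paper's. For (b)$\Rightarrow$(c), the paper writes $\widehat{K}\widehat{N}=m_1\widehat{K}+m_2\widehat{D}$ with unknown positive multiplicities, obtains $m_1|K|\chi(x)+m_2|D|\chi(d)=0$ by evaluating a representation at $\widehat{N}$, and then spends the bulk of the argument proving $m_1=m_2$ via the second orthogonality relation combined with the centralizer count. Your observation that $K\cup D=NK$ is a union of left $N$-cosets, so that $\widehat{N}(\widehat{K}+\widehat{D})=|N|(\widehat{K}+\widehat{D})$ holds exactly with no unknown coefficients, together with $\omega_\chi(\widehat{N})=0$ for $\chi\in{\rm Irr}(G|N)$, delivers $|K|\chi(x)+|D|\chi(d)=0$ in one line and makes the $m_1=m_2$ computation unnecessary. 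For (c)$\Rightarrow$(b), the paper computes the structure constants of $\widehat{K}\widehat{C}$ for every conjugacy class $C\subseteq N$, using all three conditions of (c); you instead note that $|NK|=|(Nx)^G|=|G|/|{\bf C}_{G/N}(Nx)|$ holds unconditionally by counting conjugate cosets, so the centralizer equality plus $NK=ND$ (extracted from the ${\rm Irr}(G/N)$ condition via linear independence of characters) already forces $NK=K\cup D$ by comparing cardinalities with $K\cup D\subseteq NK$. This also exposes the fact, which the paper's proof does not make visible, that the vanishing condition on ${\rm Irr}(G|N)$ is not needed for the implication (c)$\Rightarrow$(b); your parenthetical alternative that feeds that condition back through the central characters works too, but is strictly more machinery than your main route. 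Both of your key steps ($\omega_\chi(\widehat{N})=\frac{|N|}{\chi(1)}[\chi_N,1_N]$ and the coset-orbit count) are standard and correctly applied, so the argument is complete.
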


\begin{proof}  The  equivalence between (a) and (b) is elementary. Assume (a), so there exist $n\in N$ and $d^g\in D$ for some $g\in G$, such that $nx=d^g$. Hence $Nx=Nd^g=(Nd)^g$. It clearly follows that $K\cup D\subseteq NK=ND\subseteq K\cup D$, so the equality holds. Conversely, if we assume (b), we trivially have $Nx\subseteq K\cup D$. Now the fact that  $Nx\subseteq K$ yields to $NK=K$, against  our assumption. Therefore, $Nx\not\subseteq K$, as required.

\medskip
We prove now that (a) and (b) imply (c). As we have said above, the hypotheses imply that $Nx=(Nd)^g$ for some $g\in G$, so
it is evident that $\chi(x)=\chi(d)$ for every $\chi\in {\rm Irr}(G/N)$.
Suppose now that $\chi\in{\rm Irr}(G|N)$ and let $\mathfrak{X}$ be a representation of $G$ that affords $\chi$.   Also,  we know that $\mathfrak{X}$ can be linearly extended to $\mathbb{C}[G]$.   Since $N$ is a disjoint union of conjugacy classes of $G$, then $$\widehat{ N} = \sum _{n \in N} n\in {\bf Z}(\mathbb{C}[G])$$ so, by Schur's Lemma, $\mathfrak{X}(\widehat{N})$ is a scalar matrix. On the other hand, the trace of $\mathfrak{X}(\widehat{N})$ is $$\sum_{n \in N}\chi(n)=|N|[\chi_N, 1_N] =0,$$ so $\mathfrak{X}(\widehat{N}) = O$, where $O$ denotes the zero matrix. Then $\mathfrak{X}(\widehat{Ng})=\mathfrak{X}(\widehat{N})\mathfrak{X}(g)=O$, for every $g\in G$. Now, by hypothesis, we can write $\widehat{K}\widehat{N} = m_1\widehat{K}+m_2\widehat{D}$ with $m_1, m_2>0$, and we know that $\mathfrak{X}(\widehat{N}\widehat{K})=\sum_{x^g \in K} \mathfrak{X}(\widehat{Nx^g})=O$.
Hence, by taking traces, we have
\begin{equation}
0={\rm Trace}(\mathfrak{X}(\widehat{N}\widehat{K}))=m_1 {\rm Trace}(\widehat{K}) + m_2 {\rm Trace}(\widehat{D})=m_1|K| \chi(x) + m_2|D| \chi(d). \label{eqn1}
\end{equation}

We will prove that $m_1=m_2$, and thus,  we will obtain $|K| \chi(x) +|D| \chi(d)=0$.
The second orthogonality relation together with the fact that  $\chi(x)=\chi(d)$ for every $\chi\in {\rm Irr}(G/N)$  give rise to the equations:

\begin{equation}
|{\bf C}_G(x)|=\sum_{\chi\in {\rm Irr}(G/N)} |\chi(x)|^2
+ \sum_{\chi\in {\rm Irr}(G|N)}
 \chi(x)\overline{\chi(x)}=|{\bf C}_{G/N}(Nx)|+ \sum_{\chi\in {\rm Irr}(G|N)}|\chi(x)|^2,  \label{eqn2}
 \end{equation}

\begin{equation} 0=\sum_{\chi\in {\rm Irr}(G/N)} |\chi(x)|^2
+ \sum_{\chi\in {\rm Irr}(G|N)}
 \chi(x)\overline{\chi(d)} =
|{\bf C}_{G/N}(Nx)| + \sum_{\chi\in {\rm Irr}(G|N)} \chi(x)\overline{\chi(d)}. \label{eqn3}
 \end{equation}
By applying Eq. (\ref{eqn1}) to Eq. (\ref{eqn3}), we get

\begin{equation}\label{eqn4}
0=|{\bf C}_{G/N}(Nx)| - \frac{m_1|K|}{m_2|D|}\sum_{\chi\in {\rm Irr}(G|N)}
 |\chi(x)|^2.
 \end{equation}

\noindent
On the other hand, the  equality $NK=K \cup D$ allows to write
\begin{equation}
Nx_1\cup \cdots \cup Nx_t= K\cup D \label{eqn5}
\end{equation}
where $Nx_i$ for $i=1,\ldots t$ are the distinct conjugates of $Nx$  in $G/N$, that is, $t=|G/N:{\bf C}_{G/N}(Nx)|$. Taking cardinalities in Eq. (\ref{eqn5}), we get $|N|t= |K|+ |D|$, and hence
$$|{\bf C}_{G/N}(Nx)|=  \frac{|G|}{|K|+ |D|}.$$
Thus, the last equality of (c) is proved.
We now substitute this value, together with the equality $|{\bf C}_G(x)|=|G|/|K|$, into  Eqs. (\ref{eqn2}) and (\ref{eqn4}),  and this gives
$$0= \frac{|G|}{|K|+ |D|}-\frac{m_1|K|}{m_2|D|}\left(\frac{|G|}{|K|}-\frac{|G|}{|K|+ |D|}\right).$$

\noindent
It follows that $m_1=m_2$, as required.

\medskip
In order to prove that (c) implies (b), we show first that $KC\subseteq K\cup D$ for every conjugacy class $C$ of $G$ contained in $N$. Write
 \begin{equation}\label{eqn6}
 \widehat{K}\widehat{C}=\alpha_K\widehat{K}+\alpha_D\widehat{D} + \sum_{i=1}^n \alpha_i \widehat{C_i},
 \end{equation}
 $C_1, \ldots , C_n$ being  all the conjugacy classes of $G$ (including the trivial class) distinct from $K$ and $D$, and $\alpha_K,\alpha_D, \alpha_i$ are non-negative integers. We compute  $\alpha_K|K| + \alpha_D|D|$.

 $$ \alpha_K|K| + \alpha_D|D|=\frac{|K|^2|C|}{|G|}\sum_{\chi\in {\rm Irr}(G)}\frac{\chi(x)\chi(c)\chi(x^{-1})}{\chi(1)}+
 \frac{|D||K||C|}{|G|}\sum_{\chi\in {\rm Irr}(G)}\frac{\chi(x)\chi(c)\chi(d^{-1})}{\chi(1)}=$$
 $$=\frac{|K|^2|C|}{|G|}\sum_{\chi\in {\rm Irr}(G)}\frac{|\chi(x)|^2\chi(c)}{\chi(1)}+
 \frac{|D||K||C|}{|G|}\sum_{\chi\in {\rm Irr}(G)}\frac{\chi(x)\chi(c)\overline{\chi(d})}{\chi(1)}= $$
 $$=\frac{|K|^2|C|}{|G|}(\sum_{\chi\in {\rm Irr}(G/N)}|\chi(x)|^2 + \sum_{\chi\in {\rm Irr}(G|N)}\frac{|\chi(x)|^2\chi(c)}{\chi(1)}) +$$
 $$+  \frac{|D||K||C|}{|G|}(\sum_{\chi\in {\rm Irr}(G/N)} \chi(x)\overline{\chi(d)} + \sum_{\chi\in {\rm Irr}(G|N)}\frac{\chi(x)\chi(c)\overline{\chi(d})}{\chi(1)}).$$

\noindent
By employing the  three equations appearing in statement (c), we have

  $$ \alpha_K|K| + \alpha_D|D|= (\frac{|K|^2|C|}{|G|} +
  \frac{|D||K||C|}{|G|})|{\bf C}_{G/N}(Nx)| +$$
 $$+ \frac{|K|^2|C|}{|G|}\sum_{\chi\in {\rm Irr}(G|N)}\frac{|\chi(x)|^2\chi(c)}{\chi(1)} +\frac{|D||K||C|}{|G|}
  \sum_{\chi\in {\rm Irr}(G|N)}(-\frac{|K|}{|D|})\frac{|\chi(x)|^2\chi(c)}{\chi(1)}=$$
$$=( \frac{|K|^2|C|}{|G|} +
  \frac{|D||K||C|}{|G|})\frac{|G|}{|K|+|D|}=|K||C|.$$
  
 On the other hand,  by counting elements in Eq. (\ref{eqn6}), we obtain
 $$|K||C|=\alpha_K|K|+ \alpha_D|D| + \sum_{i=1}^n\alpha_i|C_i|.$$
By joining the above results, we deduce that $\alpha_i=0$ for  $i=1,\ldots , n$. This means that $KC\subseteq K\cup D$, as required. We remark that $KC=K$ is feasible when $\alpha_D=0$ and $\alpha_K=|C|$ (for instance, when $C=1$),  and $KC=D$ can also occur (when $\alpha_K=0$ and $\alpha_D=|K||C|/|D|)$.
Accordingly, we conclude that $KN\subseteq K \cup D$. Finally, the fact that $\chi(x)=\chi(d)$ for every $\chi\in {\rm Irr}(G/N)$  is equivalent (because ${\rm Irr}(G)$ is a basis of the vector space of class functions of $G$) to assert that $Nx$ and $Nd$ are conjugate, and consequently, $NK=ND$. This obviously implies that $K\cup D\subseteq KN$, so the equality of (b) holds.
\end{proof}

We remark that  if the assumption  that all elements in $Nx$ are not $G$-conjugate  is eliminated in  statement (1) of  Theorem A, then the result varies somewhat. In fact, when $Nx\subseteq K$, where $K$ is the conjugacy class of $x$, it turns out that $\chi(x)=0$ for every $\chi \in {\rm Irr}(G)$ with $N$  not contained in its kernel (see for instance  \cite[Lemma 3.1]{GN}). We want to show that, indeed, this  condition is equivalent.

\begin{theorem} Let $G$ be a finite group and $N$ a normal subgroup of $G$. Let $x\not\in N$ . Then all the
elements in $Nx$ are $G$-conjugate if and only if $\chi(x)=0$ for every $\chi \in {\rm Irr}(G)$ that does not contain $N$ in its kernel.
\end{theorem}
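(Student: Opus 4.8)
The plan is to treat this statement as the degenerate case of Theorem~\ref{2} in which the two classes coincide, so that the single vanishing condition $\chi(x)=0$ replaces the equation $|K|\chi(x)+|D|\chi(d)=0$. For the forward implication, assume all elements of $Nx$ are $G$-conjugate (necessarily to $x$), fix $\chi\in{\rm Irr}(G|N)$, and let $\mathfrak{X}$ be a representation affording $\chi$, extended linearly to $\mathbb{C}[G]$. Exactly as in the proof of Theorem~\ref{2}, $\widehat{N}\in{\bf Z}(\mathbb{C}[G])$, so $\mathfrak{X}(\widehat{N})$ is a scalar matrix whose trace is $\sum_{n\in N}\chi(n)=|N|[\chi_N,1_N]=0$ since $N\not\subseteq{\rm ker}(\chi)$; hence $\mathfrak{X}(\widehat{N})=O$. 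Then $\mathfrak{X}(\widehat{Nx})=\mathfrak{X}(\widehat{N})\mathfrak{X}(x)=O$, and taking traces gives $0=\sum_{n\in N}\chi(nx)=|N|\chi(x)$, because $\chi(nx)=\chi(x)$ for every $n\in N$. So $\chi(x)=0$.

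For the converse, the key step is to turn the hypothesis $\chi(x)=0$ for all $\chi\in{\rm Irr}(G|N)$ into an equality of centralizer orders, by combining the second orthogonality relation in $G$ with the one in $G/N$ (under the usual identification of $\chi\in{\rm Irr}(G/N)$ with a character of $G/N$, so that $\chi(x)=\chi(Nx)$):
\[
|{\bf C}_G(x)|=\sum_{\chi\in{\rm Irr}(G/N)}|\chi(x)|^2+\sum_{\chi\in{\rm Irr}(G|N)}|\chi(x)|^2=\sum_{\chi\in{\rm Irr}(G/N)}|\chi(Nx)|^2=|{\bf C}_{G/N}(Nx)|.
\]
This yields $|x^G|=|G:{\bf C}_G(x)|=|(G/N):{\bf C}_{G/N}(Nx)|=|(Nx)^{G/N}|$. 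I would then let $M$ be the full preimage in $G$ of the conjugacy class $(Nx)^{G/N}$ of $G/N$: it is a union of cosets of $N$, it contains $x^G$ (if $y=x^g$ then $Ny=(Nx)^{Ng}\in(Nx)^{G/N}$), and $|M|=|N|\,|(Nx)^{G/N}|=|N|\,|x^G|/|N|=|x^G|$. Since $x^G\subseteq M$ and $|x^G|=|M|$, we get $M=x^G$, whence $Nx\subseteq M=x^G$ and all elements of $Nx$ are $G$-conjugate.

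I do not anticipate a real obstacle here; the only points needing care are keeping the two instances of the second orthogonality relation (in $G$ and in $G/N$) separate, and observing that $x^G\subseteq M$ so that the equality of cardinalities forces $M=x^G$. As an alternative route for the converse, one can mimic the proof that (c) implies (b) in Theorem~\ref{2} via the class-algebra constants: expanding $\widehat{K}\,\widehat{C}$ with $K=x^G$ and $C$ a conjugacy class of $G$ contained in $N$, the hypothesis annihilates the ${\rm Irr}(G|N)$-contribution to every structure constant, and specializing to $C=\{1\}$ gives $x^G=M$ directly; this version has the advantage of staying entirely inside the framework already set up in this section.
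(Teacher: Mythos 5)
Your proof is correct, but the converse takes a genuinely different route from the paper. For the forward direction the paper simply cites \cite[Lemma 3.1]{GN}, whereas you reprove it via $\mathfrak{X}(\widehat{N})=O$; that argument is sound and matches the computation already carried out inside the proof of Theorem~\ref{2}. For the converse, both you and the paper start from the same key identity $|{\bf C}_G(x)|=|{\bf C}_{G/N}(Nx)|$ obtained from the second orthogonality relation after killing the ${\rm Irr}(G|N)$ terms. From there the paper stays inside the class-algebra framework: for each class $C\subseteq N$ it computes the structure constant $\alpha_K$ in $\widehat{K}\widehat{C}=\alpha_K\widehat{K}+\sum\alpha_i\widehat{C_i}$, finds $\alpha_K=|C|$, and compares cardinalities to force $\alpha_i=0$, i.e.\ $KC=K$ for all such $C$, hence $KN=K$. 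You instead argue by pure counting: the preimage $M$ of $(Nx)^{G/N}$ is a union of cosets of $N$ containing $x^G$, and $|M|=|N|\,|(Nx)^{G/N}|=|x^G|$ forces $M=x^G\supseteq Nx$. Your version is shorter and more elementary (no structure constants needed), while the paper's version reuses the exact machinery deployed for Theorem~\ref{2} and for the implication (c)$\Rightarrow$(b); you correctly identify the latter as an alternative route. Both are valid.
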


\begin{proof} As we said above, the direct sense is known. Conversely, assume that $\chi(x)=0$ for every $\chi \in {\rm Irr}(G|N)$. Note first that the second ortogonality relation  provides the equality $|{\bf C}_G(x)|=|{\bf C}_{G/N}(Nx)|$.  Let $C$ be an arbitary conjugacy class of $G$ contained in $N$ and write
\begin{equation} \label{eqn22}
\widehat{K}\widehat{C}=\alpha_K \widehat{K} + \sum_{i=1}^n \alpha_i\widehat{C_i},
\end{equation}
where $C_i$  for $i=1,\ldots , n$ are all the conjugacy classes of $G$ distinct from $K$, and $\alpha_K$ and $\alpha_i$ are non-negative integers. Taking into account the hypothesis, we have
$$\alpha_K = \frac{|K||C|}{|G|}\sum_{\chi\in {\rm Irr}(G)}\frac{\chi(x)\chi(c)\overline{\chi(x)}}{\chi(1)}= \frac{|K||C|}{|G|} \sum_{\chi\in {\rm Irr}(G/N)}|\chi(x)|^2= $$
$$=\frac{|K||C|}{|G|} |{\bf C}_{G/N}(Nx)|= \frac{|K||C|}{|G|}|{\bf C}_G(x)|=|C|.$$

On the other hand, by counting elements in Eq. (\ref{eqn22}), we have
$$|K||C|=\alpha_K|K| +\sum _{i=1}^n \alpha_i|C_i|.$$
 This implies that $\alpha_i=0$ for every $i=1,\ldots, n$, or equivalently, $KC=K$. As this equality holds for every class $C$ of $G$ contained in $N$, we deduce that $KN=K$. This is clearly equivalent to state that all elements in $Nx$ are $G$-conjugate.
 \end{proof}

We demonstrate now Theorem B.

\begin{proof}[Proof of Theorem B]
 Let $\chi\in{\rm Irr}(G)$ be an extension of $\theta\in {\rm Irr}(N)$, with $\theta\neq 1_N$, that is, assume $\chi_N=\theta$. By   \cite[Lemma 8.14(c)]{Isaacs}, we know that
\begin{equation}
|N|=\sum_{y\in Nx} \chi(y)\overline{\chi(y)}. \label{eqn7}
\end{equation}
Now, by Theorem \ref{2}, we have $KN=K\cup D$, and considering  $KN$ as the union of cosets of $N$, we can write 
\begin{equation} \label{eqn8}
Nx_1\cup \cdots \cup Nx_t= K\cup D
\end{equation}
where $Nx_i$ for $i=1,\ldots t$ are the distinct conjugates of $Nx$  in $G/N$, and $t=|G/N:{\bf C}_{G/N}(Nx)|$.
By taking cardinalities, it follows that $|N|t=|K|+|D|$.  By applying Eq. (\ref{eqn7}) to $Nx_i$ for every  $i=1,\ldots t$, we obtain
$$|N|t=\sum_{i=1}^t \sum_{y\in Nx_i} \chi(y)\overline{\chi(y)}= \sum_{y\in K \cup D} \chi(y)\overline{\chi(y)}=|K||\chi(x)|^2 + |D||\chi(d)|^2.$$
Now, obviously  $\chi\in {\rm Irr}(G|N)$, so by  Theorem \ref{2}(c), we have $\chi(d)=-(|K|/|D|)\chi(x)$. Since  $|N|t=|K|+|D|$, it follows that
$$|K|+|D|= (|K|+ \frac{|K|^2}{|D|})|\chi(x)|^2.$$
This implies that $|\chi(x)|^2= |D|/|K|$, and  consequently, $|\chi(d)|^2=|K|/|D|$. Now,  both values are algebraic integers for $\chi(x)$ and $\chi(d)$ being algebraic integers too,  and moreover, they are rational, so they are necessarily integers. This forces $|K|=|D|$, and hence,  $|\chi(x)|=1$ and $\chi(x)=-\chi(d)$.  Also, if  $\varphi\in {\rm Irr}(G)$ lies over $\theta$, then by  Gallagher's theorem \cite[Corollary 6.17]{Isaacs}, we know that $\varphi=\beta \hat{\theta}$ with $\beta \in {\rm Irr}(G/N)$. Thus, (1) and (2) are proved.

\medskip
To prove (3), suppose that $\psi\in{\rm Irr}(G)$  satisfies $[\psi_N, 1_N]= [\psi_N, \theta]=0$. Let us consider again $\chi\in{\rm Irr}(G)$  any extension of $\theta$, and view $1_G$ as the trivial extension of $1_N$. We apply  twice \cite[Lemma 8.14(b)]{Isaacs}, so
$$0=\sum_{y\in Ng} \psi (y)\overline{\chi(y)}, \quad   0=\sum_{y\in Ng} \psi (y)\overline{1_G(y)}$$
for every coset $Ng$. Now,  taking into account the decomposition (\ref{eqn8}) and using the above equations, it follows that

$$0=\sum_{i=1}^t \sum_{y\in Nx_i} \psi(y)\overline{\chi(y)}= \sum_{y\in K \cup D} \psi(y)\overline{\chi(y)}=|K|\psi(x)\overline{\chi(x)} + |D|\psi(d)\overline{\chi(d)},$$

$$0=\sum_{i=1}^t \sum_{y\in Nx_i} \psi(y)\overline{1_G(y)}= \sum_{y\in K \cup D} \psi(y)=|K|\psi(x) + |D|\psi(d).$$
Since we know that $|K|=|D|$ and  $\chi(x)=-\chi(d)\neq 0$ by (1) and (2),  the above equations force $\psi(x)=\psi(d)=0$, so  the first assertion of (3) is proved. Finally, if some $\beta\in {\rm Irr}(N)$, with $\beta \neq 1_N, \theta$, extends to $\psi\in{\rm Irr}(G)$, we certainly have  $[\psi_N, 1_N]=[\psi_N, \theta]=0$ (by Clifford's theorem \cite[Theorem 6.2]{Isaacs}), and then, we have proved above that $\psi(x)=\psi(d)=0$. But by the first part of the proof, we know that $|\psi(x)|=1$, providing a contradiction. This shows that $\theta$ is the only  non-principal irreducible character of $N$ that extends to  $G$.
\end{proof}

\section{Proof of Theorem C}

We present some previously developed results that are necessary for our purposes, the first of which has already been quoted in the Introduction.

\begin{theorem}{\normalfont  \cite[Theorem B(a)]{GN}}\label{11}
Suppose that $N$ is a normal subgroup of a finite group $G$. Let $x\in G$. If all the elements of $Nx$ are $G$-conjugate, then $N$ is solvable.
\end{theorem}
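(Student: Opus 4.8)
The plan is to give a proof from scratch, the statement being \cite[Theorem B(a)]{GN}; as there, the Classification is unavoidable. The starting point is the equivalence just established: the hypothesis that all elements of $Nx$ are $G$-conjugate amounts to $\chi(x)=0$ for every $\chi\in{\rm Irr}(G|N)$. If $x\in N$ then $1\in Nx$, and all elements of $Nx$ being conjugate forces $N=1$; so from now on assume $x\notin N$. The whole point will be to show that, when $N$ is non-solvable, some $\chi\in{\rm Irr}(G|N)$ satisfies $\chi(x)\neq 0$, contradicting the hypothesis.

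First I would reduce to the case where $N$ is a minimal normal subgroup of $G$. Suppose the theorem fails and pick a counterexample $(G,N,x)$ with $|N|$ minimal. For $M\trianglelefteq G$ with $1<M<N$ one has ${\rm Irr}(G|M)\subseteq{\rm Irr}(G|N)$, so every $\chi\in{\rm Irr}(G|M)$ vanishes on $x$; by the equivalence above applied to $M$ (note $x\notin M$), all elements of $Mx$ are $G$-conjugate, and minimality of $|N|$ forces $M$ solvable. Likewise, passing to $G/M$, all elements of $(N/M)(Mx)$ are $G/M$-conjugate with $|N/M|<|N|$, so $N/M$ is solvable; then $N$ is solvable, a contradiction. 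Hence $N$ is minimal normal in $G$, and being non-solvable it is non-abelian, so $N=S_1\times\cdots\times S_k$ with each $S_i$ isomorphic to a fixed non-abelian simple group $S$.

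The key step, which I expect to be the main obstacle, is a consequence of the Classification: every non-abelian simple group $S$ has a non-principal ${\rm Aut}(S)$-invariant irreducible character $\theta_0$ that extends to ${\rm Aut}(S)$. For $S$ of Lie type one takes the Steinberg character, whose extendibility to ${\rm Aut}(S)$ is classical; for the alternating and sporadic groups one exhibits an explicit ${\rm Aut}(S)$-invariant character, the extension being automatic once ${\rm Out}(S)$ is cyclic (as $H^2$ of a cyclic group with coefficients in $\mathbb{C}^{\times}$ vanishes) and requiring a genuine check only for $A_6$. Granting this, $\theta:=\theta_0\times\cdots\times\theta_0\in{\rm Irr}(N)$ is $G$-invariant (an automorphism of any one factor fixes $\theta_0$, and $G$ merely permutes identical factors) and extends to ${\rm Aut}(N)\cong{\rm Aut}(S)\wr{\rm Sym}(k)$ via the canonical extension of a base-group character of the form $\psi\times\cdots\times\psi$; pulling this extension back along $G\to{\rm Aut}(N)$ and invoking the naturality of the extendibility obstruction, $\theta$ extends to an $\hat\theta\in{\rm Irr}(G)$ with $\hat\theta_N=\theta$, and $\hat\theta\in{\rm Irr}(G|N)$ since $\theta\neq 1_N$.

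Finally I would rerun, in its one-class form, the counting from the proof of Theorem B: by \cite[Lemma 8.14(c)]{Isaacs}, $|N|=\sum_{y\in Nx}\hat\theta(y)\overline{\hat\theta(y)}$, and since all of $Nx$ lies in the single class $x^G$ every summand equals $|\hat\theta(x)|^2$; as $|Nx|=|N|$ this gives $|\hat\theta(x)|=1$, contradicting $\hat\theta(x)=0$. Hence $N$ is solvable. I would expect the group-theoretic reduction and this final count to be short, with essentially all the work residing in the Classification-based statement about $\theta_0$, particularly the extendibility to the full automorphism group in the alternating and sporadic cases.
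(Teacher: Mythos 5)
The paper does not actually prove this statement: it is imported verbatim from Guralnick--Navarro as \cite[Theorem B(a)]{GN}, so there is no internal proof to compare against. Your reconstruction is correct. The reduction to a minimal normal subgroup is sound, and in fact simpler than you make it --- since $Mx\subseteq Nx\subseteq x^G$ and the image of $Nx$ in $G/M$ lies in a single class of $G/M$, you do not need the character-theoretic equivalence for that step, only for the final contradiction. The Classification-dependent input you isolate (a non-principal $\theta_0\in{\rm Irr}(S)$ extending to ${\rm Aut}(S)$, hence $\theta_0\times\cdots\times\theta_0$ extending to $G$) is exactly the content of Theorems \ref{3}, \ref{4} and \ref{5} of this paper: the Steinberg character for Lie type, the results of \cite{BCLP} for alternating and sporadic groups, with $A_6$ treated as ${\rm PSL}(2,9)$; and the closing count via \cite[Lemma 8.14(c)]{Isaacs}, which gives $|N|=|N|\,|\hat\theta(x)|^2$ against $\hat\theta(x)=0$, is clean. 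Indeed your argument is precisely the one-class degeneration of the machinery this paper deploys for Theorem B and Theorem \ref{32}, which is a coherent way to recover the cited result. The one point to tighten is the alternating/sporadic extendibility claim: cyclicity of ${\rm Out}(S)$ makes extension automatic only for an ${\rm Aut}(S)$-invariant character, so you must still exhibit an invariant non-principal one (a character whose degree occurs uniquely suffices), which is what \cite[Theorems 3 and 4]{BCLP} supply.
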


\begin{lemma} {\normalfont \cite[Lemma 2]{BCFM}}\label{12}
 Let $G$ be a finite group and $K, C$ and $D$ non-trivial conjugacy classes of $G$ such that
 $KC=D$ with $|K|=|D|$. Then $\langle C\rangle$ is solvable.
 \end{lemma}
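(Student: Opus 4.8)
The plan is to convert the hypothesis $KC=D$ into a single class--algebra identity, read off a strong pointwise relation among the three classes, and then feed it into the vanishing criterion already established in Section~2. First I would fix representatives $k\in K$, $c\in C$, $d\in D$ with $d=kc$ (possible since $d\in D=Kc$), and note that because $KC=D$ holds as a product of sets, the only nonzero structure constant of $\widehat K\widehat C$ is the one attached to $\widehat D$; counting elements and using $|K|=|D|$ gives $\widehat K\widehat C=|C|\,\widehat D$. Evaluating the central character $\omega_\chi$ on this identity, cancelling $|C|$ and using $|K|=|D|$, yields for every $\chi\in{\rm Irr}(G)$ the key relation $\chi(k)\chi(c)=\chi(1)\chi(d)$.

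Next I would extract from this a constraint on $C$ alone. Taking moduli gives $|\chi(d)|^{2}=|\chi(k)|^{2}|\chi(c)|^{2}/\chi(1)^{2}$; summing over ${\rm Irr}(G)$ and using the column orthogonality relations together with $|{\bf C}_G(k)|=|G|/|K|=|G|/|D|=|{\bf C}_G(d)|$, I obtain
\[
\sum_{\chi\in{\rm Irr}(G)}|\chi(k)|^{2}\left(1-\frac{|\chi(c)|^{2}}{\chi(1)^{2}}\right)=0 .
\]
Every summand is nonnegative, so each must vanish, forcing $|\chi(c)|=\chi(1)$ whenever $\chi(k)\neq0$. Equivalently $c\in{\bf Z}(\chi)$ for every such $\chi$; since ${\bf Z}(\chi)\trianglelefteq G$ contains the class $C=c^{G}$, this gives $\langle C\rangle\le{\bf Z}(\chi)$, and as ${\bf Z}(\chi)/{\rm ker}(\chi)$ is abelian it follows that $\langle C\rangle'\le{\rm ker}(\chi)$. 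Writing $M=\langle C\rangle'$, the contrapositive reads: every $\chi\in{\rm Irr}(G\mid M)$ satisfies $\chi(k)=0$, and then $\chi(d)=\chi(k)\chi(c)/\chi(1)=0$ as well.

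I would then invoke the coset/vanishing criterion. If $k\notin M$, the vanishing of all $\chi\in{\rm Irr}(G\mid M)$ at $k$ means, by the second theorem of Section~2, that every element of $Mk$ is $G$-conjugate, whence $M=\langle C\rangle'$ is solvable by Theorem~\ref{11}; since $\langle C\rangle/\langle C\rangle'$ is abelian, $\langle C\rangle$ is solvable. The identical argument applies with $d$ if $d\notin M$. The only remaining configuration, and the point that genuinely needs care, is $k\in M$ \emph{and} $d\in M$: then $c=k^{-1}d\in M$, so $C\subseteq M$ and $\langle C\rangle=M=\langle C\rangle'$ is perfect.

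The hard part is thus ruling out this degenerate perfect case, which I expect to dispatch by a short orthogonality contradiction rather than by any further class-algebra work. If $\langle C\rangle$ were perfect, then for each $\chi$ with $\chi(k)\neq0$ the image of $\langle C\rangle$ in $G/{\rm ker}(\chi)$ would be both perfect and central, hence trivial, so $\langle C\rangle\le{\rm ker}(\chi)$ and in particular $\chi(k)=\chi(1)$. Consequently $\chi(k)\in\{0,\chi(1)\}$ for every $\chi$, and column orthogonality of the (distinct) columns $k\neq1$ and $1$ gives $0=\sum_{\chi}\chi(k)\overline{\chi(1)}=\sum_{\chi(k)\neq0}\chi(1)^{2}\ge1$, a contradiction. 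Hence the degenerate case cannot occur, a valid representative outside $\langle C\rangle'$ always exists, and the argument closes with $\langle C\rangle$ solvable. I anticipate the main subtlety to be precisely this guarantee of applicability of the vanishing criterion, the preliminary class-algebra computation being routine.
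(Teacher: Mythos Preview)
The paper does not give its own proof of this lemma; it is quoted verbatim from \cite{BCFM} as an external input, so there is nothing in the paper to compare your argument against.

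That said, your argument is correct and is in fact nicely self-contained within the toolkit the present paper develops. The class-algebra identity $\widehat K\,\widehat C=|C|\,\widehat D$ and the resulting relation $\chi(k)\chi(c)=\chi(1)\chi(d)$ are immediate; the nonnegativity trick forcing $c\in{\bf Z}(\chi)$ whenever $\chi(k)\neq0$ is clean, and the passage to $M=\langle C\rangle'\trianglelefteq G$ with $\chi(k)=\chi(d)=0$ for all $\chi\in{\rm Irr}(G\mid M)$ is legitimate. Your invocation of the converse direction of the second theorem of Section~2 together with Theorem~\ref{11} then finishes the generic case, and the perfect case is dispatched correctly: if $\langle C\rangle=\langle C\rangle'$, then $\langle C\rangle\le{\bf Z}(\chi)$ forces $\langle C\rangle\le\ker(\chi)$, so $\chi(k)\in\{0,\chi(1)\}$ for every $\chi$, contradicting column orthogonality since $k\neq1$. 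One small remark: your proof, like the paper's application of this lemma, ultimately rests on the Classification via Theorem~\ref{11}.
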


 The proof of Theorem C is reduced to the case when $N$ is the direct product of non-abelian simple groups. In that case, the existence of irreducible characters of coprime degrees in certain non-abelian simple groups  that extend to the automorphism group of $N$ is required.  With regard to the alternating and sporadic  simple groups  the following is established.

\begin{theorem} {\normalfont  \cite[Theorem 3]{BCLP}}\label{3}
If $n\geq 6$, then  {\rm Alt($n$)} has irreducible characters of degree $(n-1)(n-2)/2$ and  $n(n-3)/2$ that extend to {\rm Sym($n$)}.
 \end{theorem}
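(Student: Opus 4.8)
The plan is to realize both required characters of ${\rm Alt}(n)$ as restrictions of well-chosen irreducible characters of ${\rm Sym}(n)$. Recall that ${\rm Irr}({\rm Sym}(n))$ is parametrized by the partitions $\lambda$ of $n$; write $\chi^{\lambda}$ for the associated irreducible character and $\lambda'$ for the conjugate partition. The two relevant characters are $\chi^{(n-2,2)}$ and $\chi^{(n-2,1,1)}$. By the hook length formula — or, concretely, by decomposing the symmetric and exterior squares of the standard module $V=\chi^{(n-1,1)}$ of dimension $n-1$, which give ${\rm Sym}^2 V=\chi^{(n)}\oplus\chi^{(n-1,1)}\oplus\chi^{(n-2,2)}$ and $\Lambda^2 V=\chi^{(n-2,1,1)}$ — one obtains $\chi^{(n-2,2)}(1)=n(n-3)/2$ and $\chi^{(n-2,1,1)}(1)=(n-1)(n-2)/2$.

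The second step is to determine when these characters restrict irreducibly to ${\rm Alt}(n)$, which is a normal subgroup of index $2$ in ${\rm Sym}(n)$. Writing $\varepsilon$ for the sign character, Clifford theory gives that an irreducible $\chi\in{\rm Irr}({\rm Sym}(n))$ restricts irreducibly to ${\rm Alt}(n)$ exactly when $\chi\varepsilon\neq\chi$; and in that case the restriction of $\chi$ to ${\rm Alt}(n)$ is an irreducible character which extends to ${\rm Sym}(n)$, namely via $\chi$ itself. Since $\chi^{\lambda}\varepsilon=\chi^{\lambda'}$, the condition $\chi^{\lambda}\varepsilon\neq\chi^{\lambda}$ reads $\lambda\neq\lambda'$. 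Now $(n-2,2)'=(2,2,1^{n-4})$, which differs from $(n-2,2)$ whenever $n\geq 5$, and $(n-2,1,1)'=(3,1^{n-3})$, which differs from $(n-2,1,1)$ whenever $n\geq 6$; for $n=5$ the latter two partitions both equal $(3,1,1)$, and this coincidence is exactly what prevents lowering the hypothesis to $n\geq 5$.

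Combining the two steps, for $n\geq 6$ the restrictions of $\chi^{(n-2,2)}$ and $\chi^{(n-2,1,1)}$ to ${\rm Alt}(n)$ are irreducible characters of degrees $n(n-3)/2$ and $(n-1)(n-2)/2$ respectively, each of which extends to ${\rm Sym}(n)$; this is the assertion. I do not expect a genuine obstacle here: the only points requiring care are the degree computation for the two partitions and the check that these partitions are non-self-conjugate precisely in the stated range, both of which are elementary. The one mild subtlety is keeping track of the boundary case $n=5$, where $(n-2,1,1)$ becomes self-conjugate, so that the statement does in fact need $n\geq 6$ and not merely $n\geq 5$.
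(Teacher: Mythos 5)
Your proposal is correct. The paper does not prove this statement---it is quoted from \cite[Theorem 3]{BCLP}---but your argument is the standard one behind that result: identify the two degrees as those of $\chi^{(n-2,2)}$ and $\chi^{(n-2,1,1)}$, and use the Clifford-theoretic criterion that $\chi^{\lambda}$ restricts irreducibly to ${\rm Alt}(n)$ precisely when $\lambda\neq\lambda'$, which holds for both partitions exactly when $n\geq 6$ (with $(3,1,1)$ self-conjugate at $n=5$, as you note). All the computations (hook lengths, conjugate partitions, the ${\rm Sym}^2/\Lambda^2$ decompositions) check out.
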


 Observe that $(n-1)(n-2)/2$ and  $n(n-3)/2$ are coprime
numbers.

\begin{theorem} {\normalfont  \cite[Theorem 4]{BCLP}} \label{4} Let $S$ be a sporadic simple group or the Tits group and let $A$ be the automorphism group of $S$. Then there exist non-linear characters $\chi_m,\chi_n \in{\rm Irr}(S)$ such that $(\chi_m(1),\chi_n(1))=1$ and that both $\chi_m$ and $\chi_n$ extend to $A$.
\end{theorem}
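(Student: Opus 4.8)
The plan is to verify the statement group-by-group using the character tables in the Atlas of finite simple groups, after first reducing the extension condition to a purely internal invariance condition. Since $S$ is non-abelian simple we have $S=S'$, so the only linear character of $S$ is the trivial one; consequently ``non-linear'' coincides with ``non-principal'' here, and we simply need two non-trivial $\chi_m,\chi_n\in{\rm Irr}(S)$ of coprime degree that extend to $A={\rm Aut}(S)$. For every sporadic simple group and for the Tits group one has ${\rm Out}(S)\in\{1,\,{\mathbb Z}/2\}$, so the factor group $A/S$ is cyclic. By the standard extension theorem over a cyclic factor group (see \cite[Theorem 11.22]{Isaacs}), a character $\chi\in{\rm Irr}(S)$ extends to $A$ if and only if it is $A$-invariant. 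Thus the task becomes: exhibit, for each such $S$, two non-trivial irreducible characters of coprime degree, each fixed by the outer automorphism.

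The key simplification is the following observation. Any automorphism of $S$ permutes ${\rm Irr}(S)$ while preserving character degrees; hence if a degree $d$ is attained by exactly one irreducible character $\chi$ of $S$, then the ${\rm Aut}(S)$-orbit of $\chi$ is a singleton and $\chi$ is automatically $A$-invariant. So it suffices to produce, for each $S$, two irreducible characters $\chi_m,\chi_n$ whose degrees are coprime and each occur with multiplicity one in the list of irreducible degrees of $S$; such characters are then fixed by ${\rm Out}(S)$ and extend to $A$ by the criterion above. When ${\rm Out}(S)=1$ the invariance condition is vacuous and $A=S$, so any two non-trivial characters of coprime degree will do.

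The execution is then a finite inspection. For each of the $26$ sporadic groups and the Tits group, one reads off from its Atlas character table a pair of irreducible degrees that are coprime and, in the cases with ${\rm Out}(S)={\mathbb Z}/2$, confirms invariance---either through the multiplicity-one shortcut just described, or, when that fails, by reading the Atlas indicators that mark which characters are fixed and which are interchanged by the outer automorphism. In practice coprime pairs are abundant: once two unique-degree characters of sufficiently different numerical shape are available, coprimality is immediate, and the marked Atlas indicators settle invariance. Recording one valid pair $(\chi_m,\chi_n)$ for each group completes the verification.

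The only genuine obstacle is that there is no uniform closed-form argument: the proof is inherently a case check across all $27$ groups, and this is precisely where the Classification enters. The delicate point lies in the cases with ${\rm Out}(S)={\mathbb Z}/2$, where one must be certain that the chosen coprime-degree characters are genuinely fixed, rather than swapped, by the outer automorphism; guarding against this is exactly the purpose of the multiplicity-one criterion and, failing that, of a direct consultation of the Atlas invariance markers.
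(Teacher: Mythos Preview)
Your proposal is correct and matches the paper's treatment: the result is quoted from \cite{BCLP}, where it is proved exactly as you describe---a case-by-case Atlas inspection, with the extension criterion over the cyclic outer automorphism group reducing the question to invariance, and the specific pairs recorded in a table. The paper itself does not reprove it but simply points to that table in \cite{BCLP}.
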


 According to the Atlas \cite{atlas},  Table 1 of \cite{BCLP}   enumerates the specific characters of Theorem \ref{4}, along with their respective degrees. Nevertheless, these degrees will not be utilized in our analysis.

\medskip
 In the context of simple groups of Lie type, in general, the existence of two non-linear irreducible characters of coprime degree that extend to the entire automorphism group is not generally valid.  Example 4 provides instances of it. Indeed, we will get more information in Theorem \ref{32}(c) under our assumptions. Instead, we will use some well-known properties of the Steinberg character. The reader is referred to \cite{Carter} for its standard properties. The Steinberg character of a simple group of Lie type $S$ is a (rational) character that extends to  Aut($S$).  This was first proved by Schmid in \cite{Schmid1, Schmid2}. Furthermore,  the Steinberg character extends to the character of a rational representation of Aut($S$) (see \cite{Feit}).
   By using tensor inductions of characters, the authors of \cite{BCLP} also demonstrated the following extensibility property, needed in our developement, related to normal subgroups that are direct product of simple groups.

\begin{theorem} {\normalfont  \cite[Lemma 5]{BCLP}} \label{5}
Let $N$ be a minimal normal subgroup of a group $G$ so that $N = S_1 \times \ldots \times S_t$, where $S_i\cong S$ is a  non-abelian simple group. Let $A$ be the automorphism group of $S$. If $\sigma \in {\rm Irr}(S)$ extends to $A$, then $\sigma \times \ldots \times \sigma \in {\rm Irr}(N)$ extends to $G$.
\end{theorem}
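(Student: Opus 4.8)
To extend $\sigma\times\cdots\times\sigma$ from $N$ to $G$, the plan is to reduce the problem to extending this character all the way up to $\mathrm{Aut}(N)$, and then to carry out that extension by passing through the base group of the wreath product $A\wr\mathrm{Sym}(t)$ and absorbing the $\mathrm{Sym}(t)$--part with a tensor power of the given representation of $A$.

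First I would record the standard structural facts. Write $\theta:=\sigma\times\cdots\times\sigma$; this lies in $\mathrm{Irr}(N)$ because an external tensor product of irreducible characters is irreducible. Since $S$ is non-abelian simple, $\mathbf Z(N)=1$, so $N\cap\mathbf C_G(N)=1$ and the conjugation action gives a homomorphism $\rho\colon G\to\mathrm{Aut}(N)$ whose restriction to $N$ is the canonical isomorphism $N\to\mathrm{Inn}(N)$. Moreover the $S_i$ are exactly the minimal normal subgroups of $N$, so every automorphism of $N$ permutes them, and consequently $\mathrm{Aut}(N)\cong A\wr\mathrm{Sym}(t)=A^t\rtimes\mathrm{Sym}(t)$, with $\mathrm{Inn}(N)$ identified with $\mathrm{Inn}(S)^t\le A^t$. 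Note also that $\theta$ is $\mathrm{Aut}(N)$--invariant: it is $\mathrm{Sym}(t)$--invariant by symmetry, and $A^t$--invariant since $\sigma$, being extendible to $A$, is $A$--invariant. With this in hand, it suffices to produce an extension $\Theta\in\mathrm{Irr}(\mathrm{Aut}(N))$ of $\theta$: the character $\Theta\circ\rho$ of $G$ then restricts to $\theta$ on $N$ (because $\rho|_N$ is the canonical isomorphism $N\cong\mathrm{Inn}(N)$ and $\Theta$ restricts to $\theta$ on $\mathrm{Inn}(N)$), hence $\Theta\circ\rho\in\mathrm{Irr}(G)$ is the desired extension.

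The core step is therefore to extend $\theta$ to $\mathrm{Aut}(N)=A\wr\mathrm{Sym}(t)$. Fix a representation $\mathcal D\colon A\to\mathrm{GL}(V)$ affording an extension $\hat\sigma\in\mathrm{Irr}(A)$ of $\sigma$. On $V^{\otimes t}$ let $A^t$ act componentwise via $\mathcal D\otimes\cdots\otimes\mathcal D$, and let $\mathrm{Sym}(t)$ act by permuting the tensor factors. I would verify the compatibility relation $\pi\,(a_1,\dots,a_t)\,\pi^{-1}=(a_{\pi^{-1}(1)},\dots,a_{\pi^{-1}(t)})$ as operators on $V^{\otimes t}$, so that these two actions combine into a representation of $A\wr\mathrm{Sym}(t)$ on $V^{\otimes t}$; its character $\Theta$ restricts on $A^t$ to $\hat\sigma\times\cdots\times\hat\sigma$, hence on $\mathrm{Inn}(N)=\mathrm{Inn}(S)^t$ to $\theta$. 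As $\theta$ is irreducible, so is $\Theta$, and $\Theta$ is the sought extension of $\theta$ to $\mathrm{Aut}(N)$. Together with the previous paragraph this completes the proof; the passage from $\mathcal D$ to $\Theta$ is precisely the tensor-induced representation attached to $V$.

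The only genuinely delicate point I foresee is the bookkeeping in the core step -- checking that the componentwise $A^t$--action and the factor-permuting $\mathrm{Sym}(t)$--action on $V^{\otimes t}$ satisfy the wreath-product relation and hence assemble into a bona fide $(A\wr\mathrm{Sym}(t))$--representation restricting correctly on $\mathrm{Inn}(N)$ -- and, alongside it, keeping track of the identifications $S_i\cong S\cong\mathrm{Inn}(S_i)$ so that ``$\Theta$ restricts to $\theta$'' is read off with respect to the canonical isomorphism $N\cong\mathrm{Inn}(N)$. The remaining ingredients -- triviality of $\mathbf Z(N)$, the description of $\mathrm{Aut}(S^t)$ as $\mathrm{Aut}(S)\wr\mathrm{Sym}(t)$, and the pullback step -- are routine. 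An essentially equivalent alternative is to take the stabilizer $H=\mathbf N_G(S_1)$ of a single factor, observe that $\sigma$ extends to $H$ through $H/\mathbf C_H(S_1)\hookrightarrow A$, and apply tensor induction $\mathrm{Ten}_H^G$ to that extension; the wreath-product formulation above is just the most transparent way to see that the outcome restricts to $\theta$ on $N$.
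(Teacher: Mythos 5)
Your proof is correct, and it follows essentially the route the paper attributes to the cited source \cite{BCLP}: the paper states this result without proof, noting only that it is obtained ``by using tensor inductions of characters,'' and your wreath-product construction on $V^{\otimes t}$ (equivalently, $\mathrm{Ten}_H^G$ applied to an extension of $\sigma$ to $\mathbf N_G(S_1)$) is precisely that tensor-induction argument, combined with the standard pullback along $G\to\mathrm{Aut}(N)$.
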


Next we give a characterization, in terms of irreducible characters, of an equality involving the product of two of conjugacy classes, which generalizes \cite[Lemma 3.1]{B}. This kind of equation arises when considering the hypotheses of Theorem C, and thereby, is useful in the proof.

\begin{lemma} \label{31} {\it Let $G$ be a finite group and $x, d, c\in G$. Let $K=x^G, D=d^G$ and $C
=c^G$ be the corresponding conjugacy classes of $G$, with $K\neq D$. Then, the following conditions are equivalent:}
\begin{itemize}
\item[(1)] $KC= K \cup D$.
\item[(2)] For every $\chi \in$ {\rm Irr}$(G)$ $$|K||C|\chi(x)\chi(c)=\chi(1)(a|K| \chi(x)+b|D|\chi(d)),$$
for some integers $a,b> 0$ such that $a|K|+b|D|=|K||C|$.
In particular, when $|K|=|D|$ then $a+b=|C|$, and
$$|C|\chi(x)\chi(c)=\chi(1)(a \chi(x)+b\chi(d)).$$
\end{itemize}
\end{lemma}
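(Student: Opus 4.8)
The plan is to exploit the class-sum algebra identity exactly as in the proof of Theorem~\ref{2}, now for the single product $\widehat{K}\widehat{C}$. First I would write, for $\chi\in\mathrm{Irr}(G)$ affording a representation $\mathfrak{X}$, the scalar by which $\mathfrak{X}$ sends $\widehat{K}$ and $\widehat{C}$, namely $\omega_\chi(\widehat{K})=|K|\chi(x)/\chi(1)$ and $\omega_\chi(\widehat{C})=|C|\chi(c)/\chi(1)$; these are the central characters, and since $\omega_\chi$ is an algebra homomorphism $\mathbf{Z}(\mathbb{C}G)\to\mathbb{C}$ we get $\omega_\chi(\widehat{K})\omega_\chi(\widehat{C})=\omega_\chi(\widehat{K}\widehat{C})$. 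The forward direction is then immediate: if $KC=K\cup D$ (a disjoint union, since $K\neq D$), then $\widehat{K}\widehat{C}=a\widehat{K}+b\widehat{D}$ with $a,b$ positive integers, and comparing cardinalities gives $a|K|+b|D|=|K||C|$; applying $\omega_\chi$ and multiplying through by $\chi(1)^2/\chi(1)$ — i.e.\ evaluating $\chi(1)\,\omega_\chi(\widehat{K}\widehat{C})$ — yields $|K||C|\chi(x)\chi(c)=\chi(1)\bigl(a|K|\chi(x)+b|D|\chi(d)\bigr)$, which is exactly (2). (The parenthetical refinements for $|K|=|D|$ follow by dividing the cardinality relation and the displayed identity by $|K|$.)

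For the converse I would argue as in the (c)$\Rightarrow$(b) part of Theorem~\ref{2}: assume the identity in (2) holds with the stated $a,b$, and expand $\widehat{K}\widehat{C}=\sum_i\alpha_i\widehat{C_i}$ with non-negative integer coefficients $\alpha_i$ running over all classes of $G$. Separate out the coefficients $\alpha_K$ of $\widehat{K}$ and $\alpha_D$ of $\widehat{D}$ and compute $\alpha_K|K|+\alpha_D|D|$ using the standard formula
$$\alpha_i=\frac{|K||C|}{|G|}\sum_{\chi\in\mathrm{Irr}(G)}\frac{\chi(x)\chi(c)\overline{\chi(c_i)}}{\chi(1)}.$$
Plugging the hypothesis (2) into the sums for $\alpha_K$ and $\alpha_D$ turns $\chi(x)\chi(c)$ into a linear combination of $\chi(x)$ and $\chi(d)$, and the second orthogonality relation (the rows indexed by $x$ and by $d$ being distinct classes, so $\sum_\chi\chi(x)\overline{\chi(x)}=|\mathbf{C}_G(x)|=|G|/|K|$ while $\sum_\chi\chi(d)\overline{\chi(x)}=0$, and symmetrically) collapses everything; a short calculation gives $\alpha_K|K|+\alpha_D|D|=a|K|+b|D|=|K||C|$. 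Counting elements in $\widehat{K}\widehat{C}$ gives $|K||C|=\sum_i\alpha_i|C_i|=\alpha_K|K|+\alpha_D|D|+\sum_{C_i\neq K,D}\alpha_i|C_i|$, hence all the remaining $\alpha_i$ vanish, so $KC\subseteq K\cup D$. That $both$ $K$ and $D$ actually occur is forced by $a,b>0$ (so $\alpha_K\ge a>0$ would need justification — better: one shows $\alpha_K=a$, $\alpha_D=b$ directly from the two equations $\alpha_K|K|+\alpha_D|D|=a|K|+b|D|$ together with evaluating the identity at $\chi=1_G$, which gives another linear relation pinning down $\alpha_K$ and $\alpha_D$), giving $KC=K\cup D$.

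The main obstacle is the converse bookkeeping: one must be careful that the hypothesis "for some integers $a,b>0$" is used consistently, that the orthogonality relations are applied to the correct pair of columns (using $K\neq D$ so that $x$ and $d$ index genuinely different classes), and — the subtle point — that one recovers $\alpha_K=a$ and $\alpha_D=b$ separately rather than merely the combination $a|K|+b|D|$, so as to conclude that $D\subseteq KC$ and not just $KC\subseteq K\cup D$. Evaluating the displayed identity of (2) at the trivial character $1_G$ yields $|K||C|=a|K|+b|D|$ (consistent) and, more usefully, comparing it against the element-count identity at a second well-chosen character, or simply invoking that $\widehat{K}\widehat{C}=\alpha_K\widehat{K}+\alpha_D\widehat{D}$ already determines $\alpha_K,\alpha_D$ via the two independent equations coming from $\omega_{1_G}$ and from the cardinality count, resolves it. Everything else is the same routine manipulation of class-algebra structure constants and column orthogonality already rehearsed in Theorem~\ref{2}, so I would keep the write-up terse and refer back to that proof for the mechanics.
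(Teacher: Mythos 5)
Your forward direction is exactly the paper's: apply the central character homomorphism $\omega_\chi$ to $\widehat{K}\widehat{C}=a\widehat{K}+b\widehat{D}$ and count elements. The converse also starts the same way, but you then take a detour that creates a real difficulty. By computing only the combination $\alpha_K|K|+\alpha_D|D|=|K||C|$ and invoking the element count, you get $KC\subseteq K\cup D$ but not that $D$ (or even $K$) actually occurs, and you are aware of this. The problem is that both of your proposed repairs fail or are left unexecuted. Evaluating the identity in (2) at $1_G$ gives $|K||C|=a|K|+b|D|$, which you yourself note is just the cardinality relation again; and the ``two independent equations coming from $\omega_{1_G}$ and from the cardinality count'' are in fact the \emph{same} equation, since $\omega_{1_G}(\widehat{C_i})=|C_i|$, so applying $\omega_{1_G}$ to $\alpha_K\widehat{K}+\alpha_D\widehat{D}$ reproduces the element count verbatim. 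So as written, the converse does not establish $KC=K\cup D$, only containment.

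The fix is already in your own setup, and it is what the paper does: substitute $\chi(x)\chi(c)=\chi(1)\bigl(a|K|\chi(x)+b|D|\chi(d)\bigr)/(|K||C|)$ into the structure-constant formula for \emph{each individual} $\alpha_i$, not into the combination. Then
$$\alpha_i=\frac{1}{|G|}\Bigl(a|K|\sum_{\chi}\chi(x)\overline{\chi(c_i)}+b|D|\sum_{\chi}\chi(d)\overline{\chi(c_i)}\Bigr),$$
and column orthogonality (using $K\neq D$) gives $\alpha_i=a$ when $C_i=K$, $\alpha_i=b$ when $C_i=D$, and $\alpha_i=0$ otherwise, all in one stroke. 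Since $a,b>0$ by hypothesis, both classes appear and $KC=K\cup D$; no element count and no auxiliary pinning-down argument is needed. You had the exact orthogonality relations written out, so this is a matter of applying them one class at a time rather than aggregating first.
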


{\it Proof}. Suppose  first that $KC= K\cup D$.  Let  us denote by $\widehat{K},\widehat{D}$ and $\widehat{C}$  the sum classes in the center of  ${\mathbb C}[G]$, so we can  write $\widehat{K}\widehat{C}=a\widehat{K}+b\widehat{D}$ for some integers $a,b> 0$. Notice that, by counting elements, $|K||C|= a|K|+ b|D|$. Also, by  \cite[Lemma 3.8 and Theorem 3.9]{Huppert}, we know that
$$\frac{|K|\chi(x)}{\chi(1)} \frac{|C|\chi(c)}{\chi(1)}= a \frac{|K|\chi(x)}{\chi(1)}+b\frac{|D|\chi(d)}{\chi(1)}$$
 for every $\chi \in$ Irr$(G)$, and hence,  the first equality  stated in (2)  is proved. Of course, the consequence when $|K|=|D|$ is straightforward.

\medskip
Conversely, assume that (2) holds. If the $C_i's$ denote all the conjugacy classes of $G$ and $c_i\in C_i$, we write

\begin{equation}\label{1}
\widehat{K}\widehat{C}=\sum_{i}\alpha_{i}\widehat{C}_{i} \, \, \, \, \, {\rm with} \, \, \, \, \, \alpha_{i}=\frac{|K||C|}{|G|}\sum_{\chi\in {\rm Irr}(G)}\frac{\chi(x)\chi(c)\chi(c_{i}^{-1})}{\chi(1)}.
 \end{equation}
 On the other hand, by isolating $\chi(x)\chi(c)$ from the equation given in (2), we get
 $$
 \chi(x)\chi(c)=\frac{\chi(1)(a|K|\chi(x)+b|D|\chi(d))}{|K||C|},
 $$
  and by replacing it in Eq. (\ref{1}), we have
  $$
 \alpha_{k}=\frac{|K||C|}{|G|}\sum_{\chi\in {\rm Irr}(G)} \frac{(a|K|\chi(x)+b|D|\chi(d))\chi(c_i^{-1})}{|K||C|}=$$
$$ =\frac{1}{|G|}(a |K| \sum_{\chi\in {\rm Irr}(G)} \chi(x)\chi(c_i^{-1})+b |D| \sum_{\chi\in {\rm Irr}(G)} \chi(d)\chi(c_i^{-1})).
$$

By using the second orthogonality relation we deduce that $\alpha_{i}= a$ when $C_{i}=K$; that $\alpha_{i}= b$ when $C_{i} =D$; and that $\alpha_{i}=0$ otherwise. This implies that $KC=K \cup D$,  so (1) is proved. $\Box$

\medskip

The proof of Theorem C reduces to the case when $N$ is a direct product of isomorphic non-abelian simple groups, so we need first to deal with this case.

\begin{theorem}\label{32}
 Suppose that $G$ is a finite group, $N$  a non-abelian minimal normal subgroup of $G$ and $K=x^G$ and $D=d^G$ are conjugacy classes of $G$ such that $Nx\subseteq K\cup  D$. Then the following properties hold.
 \begin{itemize}
 \item[(1)] $|K|=|D|$.
 \item[(2)] $N\cong S\times \cdots \times S$, where $S$ is a simple group of Lie type of odd characteristic.
 \item[(3)] Let $\mathsf{St}$ be the Steinberg character of $S$. Then $\theta= \mathsf{St}\times \ldots \times \mathsf{St}\in {\rm Irr}(N)$  is the unique non-trivial irreducible character of $N$ that extends to $G$.
 \end{itemize}
\end{theorem}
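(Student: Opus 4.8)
The plan is to exploit the hypothesis $Nx\subseteq K\cup D$ together with Theorem~\ref{2} and Theorem~B to extract structural information on $N$, then feed this into the Classification via the extensibility results quoted above. First observe that, since $N$ is a non-abelian minimal normal subgroup, $N=S_1\times\cdots\times S_t$ with each $S_i\cong S$ a non-abelian simple group. By Theorem~\ref{5} (with $A={\rm Aut}(S)$), the Steinberg character $\mathsf{St}$ of $S$ extends to ${\rm Aut}(S)$ (a fact recalled in the excerpt, originally due to Schmid), hence $\theta:=\mathsf{St}\times\cdots\times\mathsf{St}\in{\rm Irr}(N)$ is a non-principal irreducible character of $N$ that extends to $G$. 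Now Theorem~B applies verbatim to this $\theta$: it immediately gives (1) $|K|=|D|$, and moreover tells us that $\theta$ is the \emph{only} non-principal irreducible character of $N$ extending to $G$, which is exactly statement (3) once we have pinned down $S$ in (2).

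The heart of the argument is (2): ruling out characteristic $2$. The idea is to use again the fact that the alternating and sporadic simple groups, as well as the groups of Lie type in characteristic $2$, have \emph{two} non-linear irreducible characters of coprime degree that extend to the full automorphism group. For alternating groups this is Theorem~\ref{3}; for sporadic groups and the Tits group, Theorem~\ref{4}; and for groups of Lie type of even characteristic one uses the analogous statement from \cite{BCLP} (the Steinberg character of degree $|S|_2$ together with a second coprime-degree extendible character). In each of these cases, Theorem~\ref{5} upgrades each such character to an extendible character $\sigma_1\times\cdots\times\sigma_1$ and $\sigma_2\times\cdots\times\sigma_2$ of $N$, producing \emph{two} distinct non-principal irreducible characters of $N$ that extend to $G$. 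This contradicts the uniqueness clause of Theorem~B(3). Therefore $S$ must be a simple group of Lie type of odd characteristic, which cannot arise in the other families and for which no such pair of coprime-degree extendible characters is guaranteed. This establishes (2), and then (3) follows since the unique extendible non-principal character of $N$, whose existence is forced by Theorem~B, must coincide with the one we exhibited, namely $\mathsf{St}\times\cdots\times\mathsf{St}$.

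The main obstacle is the even-characteristic Lie type case: one needs the precise statement that every simple group of Lie type in characteristic $2$ admits, besides the Steinberg character, a second non-linear irreducible character of coprime degree that extends to its automorphism group. This is the content of the companion result in \cite{BCLP} and rests on a case analysis through the classification of simple groups; I would cite it rather than reprove it. A minor point to check is that the pair of characters produced are genuinely distinct and genuinely non-principal after the tensor construction of Theorem~\ref{5}, which is immediate since coprime degrees $>1$ force distinctness and non-linearity. One should also note that Lemma~\ref{12} and Lemma~\ref{31} are not needed here for the minimal-normal case, but will be the tools used to bootstrap from Theorem~\ref{32} to the general (non-solvable $N$) statement of Theorem~C.
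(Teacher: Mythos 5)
Your treatment of (1) and the reduction of (3) to (2) is fine, and your exclusion of the alternating and sporadic cases is actually valid and somewhat slicker than the paper's: two coprime-degree non-linear characters of $S$ extending to ${\rm Aut}(S)$ yield, via Theorem \ref{5}, two distinct non-principal irreducible characters of $N$ extending to $G$, which indeed contradicts the uniqueness clause of Theorem B(3). (The paper instead runs a Bezout argument with Lemma \ref{31} to show each such character would have to vanish on $N\setminus\{1\}$.) One small omission: before invoking Theorem B you must check that $Nx\not\subseteq K$ and $K\neq D$; this is the first step of the paper's proof and follows from Theorem \ref{11}, since $Nx\subseteq K$ would force $N$ solvable.

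The genuine gap is exactly where you locate ``the main obstacle'': the exclusion of Lie type in characteristic $2$. There is no ``companion result in \cite{BCLP}'' guaranteeing that every simple group of Lie type in even characteristic has a second non-linear extendible character of degree coprime to $|S|_2$; the paper explicitly remarks that for simple groups of Lie type ``the existence of two non-linear irreducible characters of coprime degree that extend to the entire automorphism group is not generally valid'' (already for ${\rm PSL}(2,8)$ the field automorphism permutes the degree-$7$ and degree-$9$ characters, leaving only the Steinberg character extendible among the non-linear ones). So your argument collapses precisely in the case it was designed to handle. The paper's actual route is different and uses the tools you set aside: first it shows $KC=K\cup D$ for every non-trivial class $C$ of $G$ inside $N$ (ruling out $KC=K$ via Theorem \ref{11} and $KC=D$ via Lemma \ref{12}, using $|K|=|D|$); then, since the Steinberg character vanishes on $2$-singular elements, an extension $\hat\theta$ of $\mathsf{St}\times\cdots\times\mathsf{St}$ satisfies $\hat\theta(c)=0$ for $c\in N$ of even order, and Lemma \ref{31} together with $\hat\theta(x)=-\hat\theta(d)\neq 0$ forces $a=b=|C|/2$, i.e.\ every $G$-class of even-order elements of $N$ has even size. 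This is refuted by taking $1\neq c\in {\bf Z}(P)\cap N$ for $P$ a Sylow $2$-subgroup of $G$. You would need to supply this (or an equivalent) argument; without it, statement (2) is unproved.
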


\begin{proof}  We first observe that $Nx\not\subseteq K$, otherwise Theorem \ref{11} would imply the solvability of $N$, against our hypotheses. We view ${\rm Alt}(5)\cong {\rm PSL}(2, 4)$ and ${\rm Alt}(6)\cong {\rm PSL}(2, 9)$ as simple groups of Lie type. According to  Theorems \ref{3} and \ref{4}, and the aforementioned property that the Steinberg character of a simple group of Lipe type $S$ extends to Aut$(S)$, joint with  Theorem \ref{5},  it can be affirmed that there exists $\chi \in {\rm Irr}(G)$  that extends some $\phi\in {\rm Irr}(N)$ with $\phi\neq 1_N$.  Therefore, by Theorem B(1),  we have $|K|=|D|$.

\medskip
We know that $NK=K \cup D$ by Theorem \ref{2}. Then, for every non-trivial conjugacy class $C$ of $G$ contained in $N$, we have $KC\subseteq K \cup D$. If $KC=K$ or $KC=D$, then  the solvability of $\langle C\rangle= N$ follows  by Theorem \ref{11} and Lemma \ref{12}, respectively, contradicting our hypothesis. Therefore, $KC=K\cup D$ for every $C\neq 1$, so we are in a position to apply
 Lemma \ref{31}.
\medskip

We  analyze now each of the distinct possibilities for $S$ given by the Classification of Finite Simple Groups.
Suppose first that $S$ is isomorphic to an alternating  or a sporadic simple group. As we said, ${\rm Alt}(5)$ and ${\rm Alt}(6)$ will be treated as groups of Lie type. Then, by Theorems \ref{3} and \ref{4}, there exist two (nonlinear) irreducible characters of $S$, say $\theta_1$ and $\theta_2$, having  coprime degrees that extend to Aut$(S)$ (the automorphism group of ${\rm Alt}(n)$ is ${\rm Sym}(n)$ when $n\geq 7$). By Theorem \ref{5},  the characters $\theta_j\times \ldots \times \theta_j\in {\rm Irr}(N)$ for $j=1,2$ also extend to $G$. Let $\chi_1$, $\chi_2\in {\rm Irr}(G)$ denote two respective extensions of $\theta_j\times \ldots \times \theta_j$ for $j=1,2$.
 If we fix a non-trivial class $C$ of $G$ contained in $N$ and take $c\in C$,  by
 Lemma \ref{31},  there exist integers $a, b> 0$ with $a+b=|C|$ such that
$$|C|\chi_j(x)\chi_j(c)= \chi_j(1)(a \chi_j(x) + b \chi_j(d)),
$$
for $j=1,2$. Since we have proved in Theorem B(2) that $\chi_j(x)=-\chi_j(d)\neq 0$ for $j=1,2$, this yields
$$\chi_j(c)= \frac{(a-b)\chi_j(1)}{|C|} \quad {\rm for \quad } j=1,2.$$
Now, $\chi_1(1)$ and $\chi_2(1)$ are coprime integers, and hence, there exist $m,n\in {\mathbb Z}$ such that
$$m\chi_1(1)+n\chi_2(1)=1.$$
It follows that
$$m\chi_1(c)+ n\chi_2(c)=m \frac{(a-b)\chi_1(1)}{|C|} + n\frac{(a-b)\chi_2(1)}{|C|}= \frac{a-b}{|C|}$$
is an algebraic integer and rational, so it must be an integer. However,  as we are assuming $K\neq D$, then $0<a,b<|C|$, and this leads to a contradiction unless $a=b$. But if $a=b$  for every class $C\neq 1$ of $G$ contained in $N$, then $(\theta_j\times\ldots\times \theta_j)(n)= \chi_j(n)=0$ for $j=1,2$ and for every $1\neq n\in N$, and this is not possible. This contradiction  allows us to exclude the alternating groups (for $n\geq 7$) and the sporadic simple groups.

\medskip
Assume now that $S$ is a simple group of Lie type of characteristic $2$  (we include ${\rm Alt}(5)$ here) and let  $\hat{\theta}\in {\rm Irr}(G)$ be an extension of $\theta= \mathsf{St}\times \ldots \times \mathsf{St}\in {\rm Irr}(N)$. It is well-known that $\mathsf{St}$ vanishes on $2$-singular elements of $S$, and hence, $\hat{\theta}(c)=0$ for every $c\in N$ of even order. Say, for instance,  $c\in C$ for some class $C$ of $G$.  Arguing as above, by using Lemma \ref{31}, there exist integers $a,b>0$ with $a+b= |C|$ such that
$$0=\chi(1)(a\chi(x)+ b\chi(d)).$$
However, we have noted above that $\chi(x)=-\chi(d)$ is non-zero, so this gives $a=b$. Thus, $a=b=|C|/2$, and this means that every conjugacy class of $G$ of elements of even order of $N$ must have even cardinality. But this is simply not true. Indeed, if we take $P$ a Sylow 2-subgroup of $G$, then $N\cap P\unlhd P$, and  ${\bf Z}(P)\cap N\neq 1$. Then,  every $1\neq c\in {\bf Z}(P)\cap N$ satisfies that $|c^G|$ is odd.
 Therefore, the only possibility for $S$ is to be a simple group of Lie type with odd characteristic, and thus the proof of (2) is finished.

\medskip
Finally, we have already reasoned that   $\theta= \mathsf{St}\times \ldots \times \mathsf{St}\in {\rm Irr}(N)$ extends to $G$, so  assertion (3) follows straightforwardly by Theorem $B$.
\end{proof}

\noindent
{\it Remark}. The group ${\rm Alt}(6)$, viewed as ${\rm PSL}(2,9)$, is the only alternating group included in the thesis of Theorem \ref{32}, and likewise, in the thesis of Theorem C. In fact, Example 4 shows that this case may happen.
 
\medskip
Theorem B provides more information on the values that the irreducible characters of $G$ take on the conjugacy classes $K$ and $D$, appearing in the statement of Theorem \ref{32}.

\begin{corollary}\label{33}
Suppose that $G$ is a finite group, $N$  a non-abelian minimal normal subgroup of $G$ and $K=x^G$ and $D=d^G$ are conjugacy class of $G$ such that $Nx\subseteq K\cup  D$. Then $N=S\times\cdots\times S$, with $S$ a simple group of Lie type with odd characteristic and

  \begin{itemize}
  \item[(a)]  For every $\chi\in {\rm Irr}(G/N)$, we have $\chi(x)=\chi(d)$.
  \item[(b)] Let $\hat{\theta}\in {\rm Irr}(G)$ denote any extension of  $\theta= \mathsf{St}\times \ldots \times \mathsf{St}\in {\rm Irr}(N)$  to $G$.  Then $|\hat{\theta}(x)|= |\hat{\theta}(d)|=1$ and $\hat{\theta}(x)=- \hat{\theta}(d)$. In addition, for every $\chi\in {\rm Irr}(G)$ lying over $\theta$, we have $\chi=\hat{\theta}\beta$
 for some $\beta\in {\rm Irr}(G/N)$.
  \item[(c)]  For every $\chi\in {\rm Irr}(G)$ that does not lie over $\theta$ or $1_N$, we have $\chi(x)=\chi(d)=0$.
\end{itemize}
\end{corollary}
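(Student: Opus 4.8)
\emph{Proof proposal.} The plan is to obtain everything as a direct consequence of Theorem~\ref{32} together with Theorems A and B, since the present hypotheses are exactly the situation covered by those results. First I would invoke Theorem~\ref{32}: here $N$ is a non-abelian minimal normal subgroup of $G$ with $Nx\subseteq K\cup D$, so that theorem gives $N\cong S\times\cdots\times S$ with $S$ a simple group of Lie type of odd characteristic, and, by part~(3), $\theta=\mathsf{St}\times\cdots\times\mathsf{St}\in{\rm Irr}(N)$ is a non-principal irreducible character of $N$ that extends to $G$. As recorded at the very start of the proof of Theorem~\ref{32}, we also have $Nx\not\subseteq K$, since otherwise Theorem~\ref{11} would force $N$ to be solvable.

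With $Nx\subseteq K\cup D$, $Nx\not\subseteq K$, and the non-principal $\theta\in{\rm Irr}(N)$ extending to $G$, all the hypotheses of Theorem B are in force for this particular $\theta$. Then part~(b) of the corollary is precisely Theorem B(2) read off for $\theta$ and an arbitrary extension $\hat\theta\in{\rm Irr}(G)$: the extension satisfies $\hat\theta(x)=-\hat\theta(d)$ and $|\hat\theta(x)|=1$, hence also $|\hat\theta(d)|=1$, and every $\chi\in{\rm Irr}(G)$ lying over $\theta$ has the form $\chi=\hat\theta\beta$ with $\beta\in{\rm Irr}(G/N)$ (Gallagher). Likewise, part~(c) is nothing but Theorem B(3): any $\chi\in{\rm Irr}(G)$ lying over neither $\theta$ nor $1_N$ has $\chi(x)=\chi(d)=0$.

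Finally, for part~(a) I would appeal to Theorem A, equivalently to part~(c) of Theorem~\ref{2}: from $Nx\subseteq K\cup D$ and $Nx\not\subseteq K$ it follows that every $\chi\in{\rm Irr}(G/N)$ satisfies $\chi(x)=\chi(d)$. This finishes the argument. I do not expect a genuine obstacle here, as the entire content is already packaged in the earlier results; the only points requiring attention are that the present hypotheses legitimately trigger Theorems A and B — in particular that $Nx\not\subseteq K$, handled exactly as in Theorem~\ref{32} — and that the relevant extendible character $\theta$ is available, which is Theorem~\ref{32}(3).
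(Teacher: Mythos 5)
Your proposal is correct and follows exactly the paper's route: the paper's own proof is simply ``immediate from Theorem~\ref{32} and Theorem B,'' and you have filled in the same chain of deductions (Theorem~\ref{32} for the structure of $N$ and the extendibility of $\theta$, Theorem B for parts (b) and (c), and Theorem~\ref{2}(c) for part (a)), including the necessary check that $Nx\not\subseteq K$.
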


\begin{proof}
It is immediate from Theorem \ref{32} and Theorem B.
\end{proof}

We are ready to prove  Theorem C.

\medskip
\begin{proof}[Proof of Theorem C]  Suppose that $L/M$ is a non-abelian principal factor of $G$ such that $L\subseteq N$, and use bars to denote the factor group $G/M$. From the hypotheses, we certainly have $\overline{Lx}\subseteq \overline{Nx}\subseteq \overline{K}\cup \overline{D}$. Then, since $\overline{L}$ is a (non-abelian) minimal normal of $\overline{G}$, we can apply Theorem \ref{32} to conclude that $\overline{L}$ is a direct product of isomorphic simple groups of Lie type with odd characteristic. Thus, the second claim of the theorem is proved. Also, we know   that  $\theta\in {\rm Irr}(\overline{L})$, constructed from the Steinberg character of a direct factor of $\overline{L}$, extends to some $\chi\in {\rm Irr}(\overline{G})$. Now, viewing $\theta$ and $\chi$ as irreducible characters of $L$ and $G$, respectively, we certainly have that $\chi_N\in {\rm Irr}(N)$  extends to $G$. We can apply  then  Theorem B(1) to conclude that $|K|=|D|$, so the proof is finished.
 \end{proof}

\section{Examples}

We include a series of examples illustrating  different possibilities for the cardinalities of the conjugacy classes and  for the solvability or non-solvability of $N$.  For each example, we  provide the parameters in  the Small Groups library or  other libraries of  Classical Groups of GAP \cite{GAP}. This might allow the reader to verify them more easily. Example 4 can be also checked with the help of \cite{atlas}.

\medskip
\noindent
{\bf Example 1}.  Under the  conditions of Theorem A and assuming further that $N$ is solvable, the equality $|K|=|D|$ does not necessary hold.  Let $G=C_2\times {\rm Alt}(4)$ ({\it SmallGroup}(24,23)), $C_2\times C_2\cong N\leq {\rm Alt}(4)$ and $1\neq x\in{\bf Z}(G)$. Then $Nx=K\cup D$, where $K=\{x\}$ and $|D|=3$. No irreducible character of $N$  distinct from the principal one extends to $G$, so Theorem B does not hold either. Note that for every $\chi\in{\rm Irr}(G|N)$, we have
 $$\chi(x)+ 3\chi(d)=0,$$
just as Theorem A claims. An example of similar features  with trivial center is $G=C_2^3.A_4$, the second non-split extension by $C_2^3$ of ${\rm Alt}(4)$ acting faithfully ({\it SmallGroup}(96,72)),  $N$ being the only normal subgroup of order $4$ of $G$. By choosing $x$ of order $2$ lying in the only conjugacy class of size $4$, we have $Nx\subseteq K \cup D$, where $K=x^G$ and $D$ is the unique class of size $12$. 

\medskip
\noindent
 {\bf Example 2}. Here is an example of a (non-abelian) solvable normal subgroup satisfying the hypotheses of Theorem $A$ with $|K|=|D|$. Take $G={\rm SL}(2,3)$ ({\it SmallGroup}(24,3)), $N\cong Q_8$ and $x$ an element of order $3$. In fact, there exist exactly  two conjugacy classes of elements of order $3$ out of $N$, and both classes work. Let $1\neq z\in {\bf Z}(G)$. We have $Nx=x^G \cup (xz)^G$ with $|x^G|=|(xz)^G|= 4$. We  note that only $1_N$ and the irreducible character of degree $2$ of $N$ extend to $G$, so this example verifies the assumptions of Theorem B.

\medskip
\noindent
{\bf Example 3}. Another example of a solvable normal subgroup that meets the hypotheses of Theorem B is $G=A\Gamma L(8)\cong (C_2^3 \rtimes C_7)\rtimes C_3$ ({\it SmallGroup}(168,43)), $N=C_2^3\rtimes C_7$ and $x$ any element of order $3$ of $G$. Then $Nx=K\cup D$ where $K=x^G$ and $D$ is one of the two conjugacy classes of elements of order $6$, with $|K|=|D|=28$. The only non-trivial irreducible character of $N$ that extends to $G$ is the irreducible character of degree $7$.

\medskip
\noindent
{\bf Example 4}.
Let $G={\rm P\Gamma L}(2,9)$ ({\it PGammaL}(2,9)) be the projective general semilinear group,  $N= {\rm PSL}(2,9)$, and $x\in G- N$ an element of order $4$ such that $|x^G|=180$ (there exists another conjugacy class of elements of order 4 in $G-N$, but this does not work). This group, with $N$ non-abelian simple,  satisfies the conditions of Theorems A, B and C.  We have $Nx=K \cup D$, where $K=x^G$ has size $180$ and $D=d^G$ is the unique class of elements of order $8$ of size $180$.  The classes $K$ and $D$ correspond to ``4b" and ``8a", according to the GAP notation. As claimed in Corollary \ref{33}, the Steinberg character is the only non-trivial irreducible character of $N$ that extends to $G$. The group $G={\rm P\Gamma L}(2,27)$, $N= {\rm PSL}(2,27)$, with $x$ and $d$ appropiated elements of order $6$ and $12$  respectively (corresponding to either  the clases ``6a" and ``12a", or to ``6b" and ``12b", following the notation in \cite{GAP}), provide two analogue examples.

%\medskip
%\noindent
% The hypotheses of Theorem C cannot be extended  by increasing from $2$ to $3$ the number of conjugacy classes in which a given coset  of $N$ is  contained, because in that case alternating groups can also be inlvolved in the structure of $N$. For instance, the Mathieu group $M_{10}$ has three conjugacy classes out of the normal subgroup $N$ isomorphic to Alt$(6)$: a real class of elements order $4$, say $L$, and two (non-real) classes $K$ and $K^{-1}$  of elements of order $8$.  It is not difficult to check that  every conjugacy class $C\neq 1$ of $M_{10}$ contained in $N$ satisfies
 % $KC=K^{-1}C=LC=K \cup K^{-1} \cup L,$
 % and hence $KN= K \cup K^{-1} \cup L$.

\bigskip
\noindent
{\bf Acknowledgements}.  This work is supported by the National Natural Science Foundation of China (No. 12071181).

\bibliographystyle{plain}

\end{document}